\theoremstyle{remark}
\newtheorem{example}{\textbf{Example}}[section]
\numberwithin{equation}{section}
\def\tr{\textcolor{red}}
\def\tb{\textcolor{blue}}
\newcommand\figcaption{\def\@captype{figure}\caption}
\newcommand\tabcaption{\def\@captype{table}\caption}
\def\bq{\begin{equation}}
\def\eq{\end{equation}}
\def\br{\begin{eqnarray}}
\def\er{\end{eqnarray}}
\def\brr{\bq\begin{array}{rlll}}
\def\err{\end{array}\eq}
\def\R{\mathbb{R}}
\def\text#1{\hbox{#1}}
\newtheorem{thm}{Theorem}[section]
\newtheorem{lem}{Lemma}[section]
\newtheorem{rem}{Remark}[section]
\newtheorem{prop}[thm]{Proposition}
\newcommand{\bsub}{\begin{subequations}}
\newcommand{\esub}{\end{subequations}$\!$}
\title[A dynamic mass transport method for PNP equations]{A dynamic mass transport method for  Poisson-Nernst-Planck equations }
\author[  ]{Hailiang Liu and Wumaier Maimaitiyiming}
\address{$^\dagger$ Department of Mathematics, Iowa State University, USA}
\email{hliu@iastate.edu}
\address{Department of Mathematics, University of California Los Angeles, USA} 
\email{wumaier@math.ucla.edu}
\keywords{PNP equations, Optimal transport, Wasserstein distance, Positivity, Energy dissipation}
\subjclass{35Q92, 65N08, 65N12}
\begin{document}


\begin{abstract} A dynamic mass-transport method  
is proposed for approximately solving the Poisson–Nernst–Planck (PNP) equations. The semi-discrete scheme based on the JKO type variational formulation naturally enforces solution positivity and the energy law as for the continuous PNP system. The fully discrete scheme is further formulated as a constrained minimization problem, shown to be solvable, and satisfy all three solution properties (mass conservation, positivity and energy dissipation) independent of time step size or the spatial mesh size. Numerical experiments are conducted to validate convergence of the computed solutions and verify the structure preserving property of the proposed scheme.
\end{abstract}
\maketitle

\section{Introduction}

In this paper, we consider a  time-dependent system of Poisson-Nernst-Planck (PNP)  equations. Such system has been widely used to describe charge transport in diverse applications such as biological membrane channels \cite{TT53, CE93, Ei98},  electrochemical systems \cite{BTA04},
 and semiconductor devices \cite{PRS90,S84}. 

PNP equations consist of Nernst--Planck (NP) equations that describe the drift and diffusion of ion species, and the Poisson equation that describes the electrostatic interaction. Such mean field approximation of diffusive ions  admits several variants, and in non-dimensional form we consider the following
\begin{subequations}\label{PNP1}
\begin{align}
\label{PNP}
&  \partial_t \rho_i = \nabla\cdot \left [  D_i(x) \left( \nabla \rho_i + z_i\rho_i \nabla \phi \right) \right], \quad x\in \Omega\subset \R^d, \quad t>0, \\
\label{Ps}
&  -\nabla \cdot(\epsilon(x) \nabla \phi)  =  f(x)+\sum_{i=1}^{s}z_i\rho_i,
\end{align}
\end{subequations}
subject to initial data $\rho_i(x, 0)=\rho_i^{in}(x)\geq 0$ ($i=1, \cdots, s$) and appropriate boundary conditions to be specified in section \ref{sec2}.  The equations are valid in a bounded domain $\Omega$ with boundary $\partial \Omega$ and for time $t\geq 0$. Here $\rho_i=\rho_i(x,t)$ is the charge carrier density for the $i$-th species,  and $\phi=\phi(x,t)$ the electrostatic potential. $D_i(x)$ is the diffusion coefficient, $z_i$ is the rescaled charge. In the Poisson equation, $\epsilon(x)$ is the permittivity, $f(x)$ is the permanent (fixed) charge density  of  the system, $s$ is the number of species. 

Due to the wide variety of devices modeled by the PNP equations, computer simulation for this system of differential equations is of great interest. However, the PNP system is a strongly coupled system of nonlinear equations, also, the PNP system as a gradient flow can take very long time evolution to reach steady states. Hence, designing efficient and stable numerical methods for the PNP system remains an active area of research (see, e.g., \cite{LMJSC, LWWYZ20, LWYY21, SX21, SZL21, DWZ22}).

PNP system possesses two immediate properties: it preserves the non-negativity of $\rho_i$  and conserves total mass. Therefore, we can consider non-negative initial data with mass one, so that the density is in the set of probability measures $\mathcal{P}(\Omega)$ on $\Omega$. The third property is the dissipation of the total energy, which can be expressed as follows. Given energy 
\begin{equation}\label{Intro-Energy}
E= \int_{\Omega}\bigg( \sum_{i=1}^s \rho_i\log \rho_i+\frac{1}{2}(f+\sum_{i=1}^s z_i\rho_i)\phi \bigg )d x +B,
\end{equation}
with boundary correction term $B$, the NP equation (\ref{PNP}) can be written as the gradient flow
$$
\partial_t \rho_i= \nabla \cdot(\rho_i D_i(x)\nabla (\delta_{\rho_i} E)).
$$
Differentiating the energy along solutions of the PNP system, one formally obtains the energy dissipation along the gradient flow 
$$
\frac{dE}{dt} = -\int _{\Omega} \sum_{i=1}^s D_i(x)\rho_i|\nabla (\log \rho_i+z_i\phi) |^2 dx \leq 0,
$$
which indicates that the solution evolves in the direction of steepest descent of the energy. This property entails a characterization of the set of stationary states, and provides a useful tool to study its stability. Numerical methods for (\ref{PNP1}) are desired to attain all three properties at the discrete level, which are rather challenging. 
\subsection{Related work}
The most common numerical approach is the direct discretization of (\ref{PNP1}) using classical finite difference, finite volume, finite element, or discontinuous Galerkin methods \cite{FMEKLL13, LW14,HP16, MXL16, FKL17, LW17, GH17, LMCICP, LMJSC, HPY19,DWZ19,HH19,LWWYZ20, LWYY21, SX21, SZL21, DWZ22}. Such methods are explicit or semi-implicit in time, so the per time computation is cheap. But it is often challenging to ensure both unconditional positivity and discrete energy decay simultaneously. The nonlinearity also complicates the way to obtain solutions when applying implicit or semi-implicit solvers.

For the gradient flow
$$
\partial_t \rho= -\nabla^{W}_\rho E(\rho),
$$
with the gradient with respect to the quadratic Wasserstein metric $W_2(\cdot,\cdot)$, the minimizing movement approximation (see \cite{Savar05} and the references therein) 
\begin{equation}\label{GJKO}
  \rho^{n+1}=\text{argmin}_{\rho\in K} \left\{\frac{1}{2\tau}W^2_2(\rho^n,\rho)+E(\rho)\right\}, \quad \rho^0=\rho^{in}(x), 
\end{equation}
also named Jordan-Kinderlehrer-Otto (JKO) scheme (Jordan et al. \cite{JKO98}), defines a sequence $\{\rho^{n}\}$ in the probability space $K$ to approximate the solution $\rho(x, n\tau)$, where $\tau>0$ is the time step. Since $\rho^n$ is in the probability space, thus method (\ref{GJKO}) is positivity and mass preserving. The fact $W_2(\rho^n,\rho)\geq 0$  ensures the energy dissipation for any $\tau>0$. We refer the interested reader to \cite{LWC19, Katy19,MP19} for some JKO-type schemes for gradient flows in the probability space. 

The authors in \cite{KMX17} constructed the JKO scheme for a two species PNP system with constant coefficient $D_i(x)=1$ and $\epsilon(x)=1$. The existence of the unique minimizer to the JKO scheme and convergence of the minimizer to the weak solution of the PNP system have been established in \cite{KMX17}. However, the fully discrete JKO-type scheme for the multi-species variable coefficient PNP system on a bounded domain has not been studied yet. This is what we aim to accomplish in this paper.

\subsection{Our contributions}
 In the present work, we will extend Benamou-Brenier’s dynamic formulation \cite{BB00} for the Wasserstein distance to the present setting that resolve the aforementioned issues. 
\begin{itemize} 
\item In a  general setting with mixed types of boundary conditions, we identify the total energy functional which is dissipating along the solution trajectories, we also establish (Theorem \ref{lb}) lower energy bounds with coercivity for such a functional.  
\item We construct a Wasserstein-type distance and  formulate a corresponding variational scheme. The update at each step reduces to solving a constrained minimization problem, for which we prove unique solvability (Theorem \ref{thm2.3}). Three solution properties: mass conservation, positivity, and energy dissipation are shown to be preserved in time (Theorem \ref{thm2.4}).  
\item We further convert the variational scheme into a dynamic formulation, which for variable diffusion coefficients extends the classical Bennamou-Breiner formulation. To reduce computational cost, we use a local approximation for the artificial time in the constraint transport equation by a one step difference and the integral in time by a one term quadrature.  The resulting minimization problem is shown to be a first order time consistent scheme for the PNP system (Theorem \ref{thm2.5}). 
\item We present a fully discrete scheme, and prove its unique solvability (Theorem \ref{thm3.1}).
\item We establish that for any fixed time step and spatial meth size, density positivity will be propagating over all time steps (Theorem \ref{thm3.2}). This is in sharp contrast to the work \cite{LWC19}, where Fisher information regularization is added to enforce solution positivity for the aggregation equation.  

\item The fully-discrete minimization problem reduces to a convex optimization problem with linear constraints, and can be solved by some efficient optimization solvers. Our numerical tests are conducted with a simple projected gradient algorithm. 
\item Numerical results are provided to demonstrate the superior performance of the proposed method. 
\end{itemize} 

\subsection{Organization} 
The paper is organized as follows. In the next section, we provide necessary background on the dynamical formulation of the PNP system, main solution properties and its relation with Wasserstein gradient flows. We then derive the semi-discrete scheme.  In Section 3, we introduce a fully discrete scheme and study the properties of this scheme. Numerical algorithms are given in Section 4. Numerical results are provided in Section 5, and the paper is concluded in Section 6.

\noindent{\bf Notation.}  We use $[n]$ to denote $\{1, 2, \cdots, n\}$ for any integer $n$. For vector $\phi$ (fully discrete case), its amplitude is denoted by $|\phi|$. For function $\phi$, $\|\phi\|$ is its $L^2$ norm.  

 \section{Model background and semi-discretization} \label{sec2} 
In this section we briefly review the model setup and the corresponding Wasserstein gradient flow. 

\subsection{Boundary conditions} \label{sec2.1} 
 Boundary conditions are a critical component of the PNP model and determine important qualitative behaviors of the solution. Let $\Omega$ be a bounded domain with Lipschitz boundary $\partial \Omega$.  
We use the no-flux boundary condition for the NP equations, i.e., 
\begin{equation}
\label{BNP}
   D_i(x) \left( \nabla \rho_i +z_i\rho_i \nabla \phi \right)\cdot \mathbf{n}=0,\quad x\in \partial\Omega, \quad i=1, \cdots, s.
  \end{equation}
Here, $\mathbf{n}$ is the outer unit normal at the boundary point $x\in \partial\Omega.$ 

The external electrostatic potential $\phi$ is influenced by applied potential, which can be modeled by prescribing a boundary condition.
Here we consider a general form of boundary conditions:  
\begin{equation}\label{BPO}
\alpha \phi+\beta \epsilon(x)\frac{\partial \phi}{\partial \mathbf{n}}=\phi^b, \quad x\in \partial \Omega.
\end{equation}
Here $\alpha, \beta$ are physical parameters such that $\alpha\cdot \beta \geq 0$, and $\phi^b=\phi^b(x, t)$ is a given function. With such setup, we are to solve the following initial-boundary value problem: 
\begin{equation}\label{PNP1F}
\left \{
\begin{array}{lll}
&  \partial_t \rho_i  = \nabla\cdot \left [  D_i(x) \left( \nabla \rho_i + z_i\rho_i \nabla \phi \right) \right ], &  x\in \Omega, \quad  t>0, \ i=1, \cdots, s,  \\
&  -\nabla \cdot(\epsilon(x) \nabla \phi)  = f(x)+\sum_{i=1}^{s}z_i\rho_i, &  x\in \Omega, \quad t>0,\\
& \rho_i(x, 0)=\rho_i^{in}(x)\geq 0, & x\in \Omega, \quad i=1, \cdots, s, \\
 &  \left( D_i(x) \left( \nabla \rho_i +z_i\rho_i \nabla \phi \right) \right)\cdot \mathbf{n}=0, & x\in \partial\Omega, \quad t>0, \ i=1, \cdots, s,  \\
 &\alpha \phi+\beta\epsilon(x) \frac{\partial \phi}{\partial \mathbf{n}}=\phi^b, & x\in \partial \Omega, t>0.  
 \end{array}
 \right.
\end{equation}
\begin{rem} (\ref{BPO}) includes three typical forms: 
(i) the Robin boundary condition ($\alpha=1, \beta>0$) models a capacitor \cite{FMEKLL13}, (ii) the Dirichlet boundary condition ($\alpha=1,\ \beta=0$) models an applied voltage,  and (iii) the Neumann boundary condition ($\alpha= 0, \ \beta=1$) models surface changes. The case of pure Neumann boundary conditions requires      the compatibility condition
\begin{equation}\label{CC}
\int_{\Omega} \left(  f(x)+\sum_{i=1}^{s}z_i\rho^{in}_i  \right) dx+\int_{\partial \Omega} \epsilon(x) \phi^b ds=0,
\end{equation}
and an additional constraint such as $\int_{\Omega}\phi(x, t)dx=0$ so that $\phi$ is uniquely defined.

\end{rem}

Any combination of these three types can be applied to $\phi$ on a disjoint partition of the boundary. In what follows, we set  
$$
\partial \Omega=\Gamma_D\cup \Gamma_N\cup \Gamma_R,
$$
and on each part, one type of boundary condition is imposed, i.e., 
$$
\alpha=\begin{dcases}
       1 , & \text{ on } \Gamma_D, \\
       0 , & \text{ on } \Gamma_N, \\ 
       1 , & \text{ on } \Gamma_R, \\
    \end{dcases} \quad 
    \beta=\begin{dcases}
       0 , & \text{ on } \Gamma_D, \\
       1 , & \text{ on } \Gamma_N, \\ 
       \beta_R , & \text{ on } \Gamma_R, \\
    \end{dcases}\quad 
        \phi^b=\begin{dcases}
       \phi^b_D , & \text{ on } \Gamma_D, \\
       \phi^b_N , & \text{ on } \Gamma_N, \\ 
       \phi^b_R , & \text{ on } \Gamma_R. \\
    \end{dcases}
$$
The existence and uniqueness of the solution for the nonlinear PNP boundary value problems with different boundary conditions have been studied in \cite{JL12, LW05, PJ97} for the 1D case and in \cite{BSW12, JJ85} for multi-dimensions.
\subsection{Energy functional: dissipation and coercivity}
 In the presence of homogeneous boundary conditions on $\phi$, i.e., $\phi^b=0$, the PNP system is energetically closed in the sense that the free energy functional associated to (\ref{PNP1}) is of form
\begin{equation}\label{energy1}
 E_0=\int_{\Omega}\bigg( \sum_{i=1}^s \rho_i\log \rho_i+\frac{1}{2}(f+\sum_{i=1}^s z_i\rho_i)\phi \bigg )d x, 
\end{equation}
which along solution trajectories is dissipating in time.  For general boundary conditions with $\phi^b \not=0$, we need to modify the energy so that it is still dissipating along the solution of the PNP system.
To this end, we differentiate (\ref{energy1}) along the solution of (\ref{PNP1}),  with integration by parts using (\ref{BNP}), we have  
\begin{align*}\label{energydiss1}
\frac{d}{dt}E_0(\rho, \phi)(t) & =
-\int _{\Omega} \sum_{i=1}^s D_i(x)\rho_i|\nabla (\log \rho_i+z_i\phi) |^2 dx  +\frac{1}{2}\int_{\partial\Omega} \epsilon(x) \left[ 
\phi (\partial_n \phi)_t - (\partial_n \phi )\phi_t \right] ds.
\end{align*}
Assume that $\phi^b$ does not depend on time, then $ \alpha \phi_t +\beta \epsilon(x) \partial_n \phi_t=0$ on $\partial \Omega$, this allows us to express the last term as 
$$
\frac{1}{2}\frac{d}{dt} \left[\int_{\Gamma_D} \epsilon(x)\phi^b_D\partial_n\phi ds- \int_{\Gamma_N}\phi^b_N\phi ds-\frac{1}{\beta_R}\int_{\Gamma_R}\phi^b_R\phi ds \right]. 
$$
Thus the modified total energy functional can be taken as    
\begin{equation}\label{energyNew}
\begin{aligned}
   E =E_0 -\frac{1}{2} \left[\int_{\Gamma_D} \epsilon(x)\phi^b_D\partial_n\phi ds- \int_{\Gamma_N}\phi^b_N\phi ds-\frac{1}{\beta_R}\int_{\Gamma_R}\phi^b_R\phi ds \right].
   \end{aligned}
 \end{equation}
Using the Poisson equation, the total energy can be rewritten as 
\begin{equation}\label{energy}
   E(\rho, \phi)=  \int_{\Omega}\bigg( \sum_{i=1}^s \rho_i\log \rho_i+\frac{1}{2}\epsilon(x)|\nabla \phi|^2 \bigg )d x-\int_{\Gamma_D}\epsilon(x) \phi_D^b \partial_n\phi ds +
       \frac{1}{2\beta_R} \int_{\Gamma_R} |\phi|^2 ds.
 \end{equation}
\begin{prop} Assume that $\phi^b$ does not depend on time, then the extended energy functional (\ref{energy}) 
satisfies 
\begin{equation}
\label{energydiss1}
\frac{d}{dt}E(\rho, \phi)(t) = -\int _{\Omega} \sum_{i=1}^s D_i(x)\rho_i|\nabla (\log \rho_i+z_i\phi) |^2 d x \leq 0, \quad   t>0,
\end{equation}
along the solution of (\ref{PNP1}).
\end{prop}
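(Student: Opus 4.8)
The plan is to prove the dissipation identity for the equivalent form (\ref{energyNew}) of the energy, namely $E=E_0-C$ where $E_0$ is the closed-system functional (\ref{energy1}) and $C$ is the bracketed boundary correction; since (\ref{energy}) and (\ref{energyNew}) are two expressions of the same $E$ (the former obtained from the latter through the Poisson equation (\ref{Ps})), this suffices. I would first differentiate $E_0$ along a solution of (\ref{PNP1}), which produces the claimed bulk dissipation term together with a residual boundary integral, and then show that the time-independence of $\phi^b$ forces that residual to equal $\frac{d}{dt}C$, so the two boundary contributions cancel in $\frac{d}{dt}E$ and only the non-positive bulk term remains.

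First I would differentiate $E_0$ under the integral sign. The entropy part gives $\frac{d}{dt}\int_\Omega\rho_i\log\rho_i\,dx=\int_\Omega(\log\rho_i+1)\partial_t\rho_i\,dx$, where the constant $1$ drops out by mass conservation (the divergence form of (\ref{PNP}) integrates to zero under the no-flux condition (\ref{BNP})). For the electrostatic part, with $f$ time-independent, I split $\frac{d}{dt}\frac12\int_\Omega(f+\sum z_i\rho_i)\phi\,dx$ into a density-variation piece $A=\frac12\int_\Omega\sum z_i(\partial_t\rho_i)\phi\,dx$ and a potential-variation piece $B=\frac12\int_\Omega(f+\sum z_i\rho_i)\phi_t\,dx$. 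Using (\ref{Ps}) and its time derivative $\sum z_i\partial_t\rho_i=-\nabla\cdot(\epsilon\nabla\phi_t)$, I integrate by parts in both pieces; by self-adjointness of $-\nabla\cdot(\epsilon\nabla\,\cdot\,)$ their bulk parts agree, so $B-A$ collapses to the antisymmetric residual $\frac12\int_{\partial\Omega}\epsilon[\phi(\partial_n\phi)_t-(\partial_n\phi)\phi_t]\,ds$ and $A+B$ equals $\int_\Omega\sum z_i(\partial_t\rho_i)\phi\,dx$ plus this residual. Combining with the entropy piece, the interior terms assemble into $\int_\Omega\sum_i(\log\rho_i+z_i\phi)\partial_t\rho_i\,dx$. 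Substituting the flux form $\partial_t\rho_i=\nabla\cdot[D_i\rho_i\nabla(\log\rho_i+z_i\phi)]$, integrating by parts once more, and discarding the boundary flux by (\ref{BNP}) yields exactly $-\int_\Omega\sum_i D_i\rho_i|\nabla(\log\rho_i+z_i\phi)|^2\,dx$.

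It then remains to identify the residual boundary integral with $\frac{d}{dt}C$. Here I would split $\partial\Omega=\Gamma_D\cup\Gamma_N\cup\Gamma_R$ and use the time-differentiated boundary condition $\alpha\phi_t+\beta\epsilon(\partial_n\phi)_t=0$, which follows from (\ref{BPO}) because $\phi^b$ is time-independent. On $\Gamma_D$ one has $\phi_t=0$ and $\phi=\phi_D^b$, so the residual reduces to $\frac12\frac{d}{dt}\int_{\Gamma_D}\epsilon\phi_D^b\partial_n\phi\,ds$; on $\Gamma_N$ one has $\epsilon(\partial_n\phi)_t=0$ and $\epsilon\partial_n\phi=\phi_N^b$, giving $-\frac12\frac{d}{dt}\int_{\Gamma_N}\phi_N^b\phi\,ds$; on $\Gamma_R$ one substitutes $\epsilon(\partial_n\phi)_t=-\frac{1}{\beta_R}\phi_t$ and $\epsilon\partial_n\phi=\frac{1}{\beta_R}(\phi_R^b-\phi)$, and the $\phi\phi_t$ terms cancel to leave $-\frac{1}{2\beta_R}\frac{d}{dt}\int_{\Gamma_R}\phi_R^b\phi\,ds$. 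Summing the three contributions reproduces precisely the correction $C$ in (\ref{energyNew}), whence $\frac{d}{dt}E=\frac{d}{dt}E_0-\frac{d}{dt}C$ retains only the bulk term, which is manifestly $\leq 0$.

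I expect the main obstacle to be the boundary bookkeeping: one must integrate by parts twice in the electrostatic coupling and carefully exploit the self-adjointness of $-\nabla\cdot(\epsilon\nabla\,\cdot\,)$ to see that all bulk terms cancel and only the antisymmetric boundary residual survives, and then verify case-by-case on $\Gamma_D,\Gamma_N,\Gamma_R$ that the time-differentiated condition turns this residual into an exact time derivative. In the pure-Neumann case the computation additionally requires the compatibility condition (\ref{CC}) together with the normalization $\int_\Omega\phi\,dx=0$ for well-posedness, but the algebra is otherwise identical.
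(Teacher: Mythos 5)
Your proposal is correct and follows essentially the same route as the paper: differentiate $E_0$ along solutions to obtain the bulk dissipation term plus the antisymmetric boundary residual $\frac{1}{2}\int_{\partial\Omega}\epsilon\left[\phi(\partial_n\phi)_t-(\partial_n\phi)\phi_t\right]ds$, then use the time-differentiated boundary condition $\alpha\phi_t+\beta\epsilon(\partial_n\phi)_t=0$ to recognize this residual as $\frac{d}{dt}$ of the correction term in (\ref{energyNew}), so that $E=E_0-C$ dissipates. Your write-up merely makes explicit what the paper leaves terse (the self-adjointness bookkeeping in the double integration by parts, the case-by-case analysis on $\Gamma_D,\Gamma_N,\Gamma_R$, and the verification that (\ref{energy}) and (\ref{energyNew}) agree via the Poisson equation), all of which checks out.
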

Recall that on $\Gamma_D$, the usual strategy for analysis is to  transform it to the case with zero boundary value for $\phi$. This way the modified energy would include an additional term called the external potential energy. 
For simplicity, we take $\phi_D^b=0$, so that we have the following result.

\begin{thm} \label{lb}(Lower bound and coercivity  of $E$ ) Let $\Omega$ be an open, bounded Lipschitz domain, and  $\phi^b$ be independent of time with $\phi_D^b=0$, $\beta_R>0$, and  $\epsilon(x) \geq a>0$. Then the energy of form 
\begin{equation}\label{energy+}
   E(\rho, \phi)=  \int_{\Omega}\bigg( \sum_{i=1}^s \rho_i\log \rho_i+\frac{1}{2}\epsilon(x)|\nabla \phi|^2 \bigg )d x + \frac{1}{2\beta_R} \int_{\Gamma_R} \phi^2 ds
 \end{equation}
 is bounded from below. Moreover, there exist constants $c_0, c_1>0$ such that  
\begin{equation}\label{coer}
E(\rho,\phi) \geq c_0\|\phi\|_{H^1}^2-c_1. 
\end{equation}
\end{thm}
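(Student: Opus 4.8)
The plan is to treat the three groups of terms in (\ref{energy+}) separately: the entropy $\sum_{i=1}^s\int_\Omega \rho_i\log\rho_i$, the electrostatic energy $\frac12\int_\Omega \epsilon(x)|\nabla\phi|^2$, and the boundary penalty $\frac{1}{2\beta_R}\int_{\Gamma_R}\phi^2\,ds$. For the entropy I would use the elementary pointwise bound $t\log t \geq -1/e$ for $t\geq 0$ (the minimum of $t\mapsto t\log t$, attained at $t=1/e$), which integrates to $\int_\Omega \rho_i\log\rho_i \geq -|\Omega|/e$ for each $i$, hence $\sum_{i=1}^s\int_\Omega\rho_i\log\rho_i \geq -s|\Omega|/e$. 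Since $\epsilon\geq a>0$ and $\beta_R>0$ render the remaining two integrals nonnegative, the bound $E\geq -s|\Omega|/e$ follows at once; this is the first assertion and also supplies the constant $c_1 = s|\Omega|/e$ appearing in (\ref{coer}).

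For the coercivity estimate (\ref{coer}), I would discard the entropy via the same lower bound and reduce the claim to a purely quadratic inequality in $\phi$, namely that there is $c>0$ with
$$
\frac{a}{2}\norm{\nabla\phi}_{L^2(\Omega)}^2 + \frac{1}{2\beta_R}\norm{\phi}_{L^2(\Gamma_R)}^2 \geq c\,\norm{\phi}_{H^1(\Omega)}^2 .
$$
The heart of the matter is thus a generalized Poincaré (Poincaré--Friedrichs) inequality: for a bounded Lipschitz domain $\Omega$ and a boundary piece of positive surface measure,
$$
\norm{\phi}_{H^1(\Omega)}^2 \leq C\left(\norm{\nabla\phi}_{L^2(\Omega)}^2 + \norm{\phi}_{L^2(\Gamma_R)}^2\right).
$$
Granting this, I would bound the left-hand side below by $\min\{a/2,\,1/(2\beta_R)\}$ times $\bigl(\norm{\nabla\phi}_{L^2(\Omega)}^2+\norm{\phi}_{L^2(\Gamma_R)}^2\bigr)$ and then invoke the inequality to produce the desired $c_0\norm{\phi}_{H^1}^2$, which combined with the entropy bound yields (\ref{coer}).

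The main obstacle is proving the generalized Poincaré inequality, for which I would run a compactness (Peetre--Tartar) argument. Supposing it fails, there is a sequence $\phi_n$ with $\norm{\phi_n}_{H^1}=1$ while $\norm{\nabla\phi_n}_{L^2(\Omega)}^2+\norm{\phi_n}_{L^2(\Gamma_R)}^2\to 0$. By Rellich--Kondrachov compactness a subsequence converges in $L^2(\Omega)$, and since the gradients tend to zero in $L^2$ the limit $\phi$ is constant with $\norm{\phi}_{H^1}=1$. Continuity of the trace operator $H^1(\Omega)\to L^2(\partial\Omega)$ then forces $\norm{\phi}_{L^2(\Gamma_R)}=0$; because $\Gamma_R$ has positive surface measure, this constant must vanish, contradicting $\norm{\phi}_{H^1}=1$. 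Two points require care: one must assume that $\Gamma_R$ (or, using $\phi_D^b=0$ and the implied zero trace on $\Gamma_D$, the Dirichlet part) carries positive surface measure, so that additive constants cannot destroy coercivity—the pure-Neumann case genuinely needs the extra normalization $\int_\Omega\phi\,dx=0$ and is excluded under the present hypotheses—and the Lipschitz regularity of $\partial\Omega$ is precisely what underpins both the trace theorem and the Rellich compactness invoked above.
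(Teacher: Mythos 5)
Your proof is correct and follows essentially the same route as the paper: the entropy is bounded below via the pointwise inequality $t\log t\geq -1/e$, and coercivity is reduced to a generalized Poincar\'e/trace inequality on $\Gamma_R$ (or $\Gamma_D$) proved by the same Rellich--Kondrachov compactness contradiction. The only difference is bookkeeping: the paper also covers the pure-Neumann case by invoking the zero-mean normalization $\int_\Omega \phi\,dx=0$ (imposed in its Remark 2.1) together with the Poincar\'e--Wirtinger inequality, whereas you set that case aside while correctly noting that exactly this normalization would be required.
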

\begin{proof} For $\rho_i\geq 0$, we have $\int_{\Omega}\sum_{i=1}^s\rho_i\log(\rho_i)\geq -s|\Omega|/e=: -c_1$. For the $\phi$-dependent part in $E$, we argue for all possible cases. For $\Gamma_D\not=\emptyset$ we have $\phi_D^b=0$; for purely Neumann's condition we have the additional condition $\int_{\Omega} \phi(x) dx  =0$, in either case we can apply the Poincaré inequality or the Poincaré--Wirtinger inequality to conclude 
$$
\|\phi\|^2_{L^2} \leq c^* \|\nabla \phi\|^2_{L^2}
$$
with constant $c^*$ depending on the geometry of $\Omega$, hence 
$$
E \geq -c_1 + \frac{a}{2}\|\nabla \phi\|^2_{L^2} \geq c_0\|\phi\|^2_{H^1}-c_1, \quad c_0=\frac{a}{4} \min\{1, \frac{1}{c^*}\}. 
$$
For the case $\partial \Omega=\Gamma_R\cup \Gamma_N$ with $\Gamma_R \not= \emptyset$, we have 
$$
E(\rho,\phi)\geq \frac{1}{2}\min\{a, \beta_R^{-1}\}\tilde E -c_1 
$$
with $\tilde{E}(\phi):= \int_\Omega |\nabla \phi|^2 dx +\int_{\partial\Omega} |\phi|^2 ds$.  We claim that 
$$
\tilde{E}(\phi)\geq c \|\phi\|_{H^1}^2 \quad \text{for some} \quad  c>0,
$$
which can be proved with a contradiction argument. Since otherwise we can assume  $\tilde{E}(\phi_n) <\frac{1}{n}\|\phi_n\|^2_{H^1}$.  Set $w_n=\phi_n/\|\phi_n\|_{H^1}$, then $w_n \in H^1(\Omega)$ with 
$$
\|w_n\|_{H^1}=1\; \text{and} \;  \|\nabla w_n\|^2_{L^2} < 1/n. 
$$
By the Rellich-Kondrachov theorem, we can extract a subsequence $\{w_{n_k}\}$ weakly converging to $w$ in $ H^1(\Omega)$ with $\nabla w_{n_k} \to 0$ weakly in $L^2(\Omega)$. This allows us to conclude  $w\in H^1$, and $\nabla w=0$.   From $\int_{\Gamma_R} |w_{n_k}|^2ds < 1/n_k$ and 
$$
\|w\|_{L^2(\Gamma_R)} \leq \| w_{n_k}\|_{L^2(\Gamma_R)} +\| w_{n_k}
- w\|_{L^2(\Gamma_R)} \leq 1/\sqrt{n_k} +C\|w-w_{n_k}\|_{H^1},
$$
we obtain $ w=0$ on $\Gamma_R$. Hence $w=0$ a.e., this is a contradiction. We complete this case by taking $c_0=  \frac{c}{2}\min\{a, \beta_R^{-1}\}$. 
\end{proof}

\subsection{Wasserstein distance and JKO scheme for multi-density}\label{PW-JKO} In order to derive a variational scheme for the PNP system with multi-density, we need to introduce a  Wasserstein-type distance. Motivated by the well-known characterization of the Wasserstein distance in a one-component fluid obtained by Benamou-Brenier \cite{BB00}, we consider to minimize a joint functional over the set 
\begin{equation}\label{Kset}
\begin{aligned}
  K: &  = \{\rho=(\rho_1, \cdots, \rho_s), u=(u_1, \cdots, u_s):\\
  & \quad \partial_t \rho_i +\nabla\cdot(\rho_i u_i)=0,  \quad (\rho_iu_i)\cdot \mathbf{n}  = 0 \quad \text{ on }   \partial \Omega \times [0, 1],\\ 
  & \quad \rho_i\in \mathcal{P}(\Omega), \quad \rho_i(x,0)=\rho_i^0(x), \quad \rho_i( x,1)=\rho_i^1(x) \}.  
\end{aligned}
\end{equation}
For the PNP system of two species $s=2$ with $D_i(x)=1$ and $\epsilon(x)=1$ considered in \cite{KMX17}, the distance inherited from the 2-Wasserstein distance is defined by 
$$
d^2(\rho^0, \rho^1)=\sum_{i=1}^2 W_2^2(\rho_i^0, \rho_i^1). 
$$
This is equivalent to the minimization of the joint functional:  
\begin{equation}\label{dd}
d^2(\rho^0, \rho^1): =\min_{(\rho, u)\in K} \sum_{i=1}^2 \int_0^1\int_{\Omega} |u_i|^2 \rho_idx dt.
\end{equation} 
Here $t$ is an artificial time and serves to characterize the optimal curve in the density space. 
 Following \cite{JKO98}, the authors in \cite{KMX17} constructed the following JKO scheme: Given a time step $\tau$, the scheme  defines a sequence $\rho^n$ as
 \begin{equation}\label{JKO}
 \rho^0=\rho^{in}, \quad \rho^{n+1}=\arg\min_{\rho\in [ \mathcal{P}(\Omega)]^2} \left\{ \frac{1}{2\tau}d^2( \rho^n, \rho)+\mathcal{E}(\rho) \right\}.
 \end{equation}
Here $\mathcal{E}$ is the total free energy, $d^2$ is the (squared) distance on the product space as defined in (\ref{dd}). 
One of the challenges in this program lies in handling the coupling terms, some intrinsic difficulties arise due to both the specific Poisson kernel and the system setting. Note that in \cite{KMX17} with $\epsilon(x)=1$,  the electrostatic potential $\phi$ in $E(\rho, \phi)$ is replaced by   
$$
\phi[\rho]= N*(f +\sum_{i=1}^2 z_i \rho_i), \quad x\in \Omega,
$$
so that $\mathcal{E}(\rho) =E(\rho, \phi[\rho])$. Here the kernel $N \sim C/|x|^{d-2}$ serves as a counterpart of the Green's function for the Newton potential in $\R^d$. Even with this treatment derivation of the corresponding Euler--Lagrange equations is quite delicate. We refer to \cite{KMX17} for further details. 

In order to extend the above JKO-type scheme to the present setting, we face two new difficulties: 
(i) $D_i(x)$ is no longer a constant, the kinetic energy corresponding to the squared distance cost needs to be modified; 
(ii) $\epsilon(x)$ is a general non-negative function, $\phi$ cannot be expressed explicitly in terms of $\rho$.  As for (i), we follow \cite{HLTW20} and consider a modified functional 
\begin{equation}\label{WD}
 d^2(\rho^0, \rho^1): =\min_{(\rho, u)\in K} \sum_{i=1}^s \int_0^1\int_{\Omega} D_i^{-1} |u_i|^2 \rho_idx dt.   
\end{equation}
As for (ii), the Poisson equation is treated as a constraint in the resulting minimization problem. For ease of presentation we define 
\begin{equation}\label{Aset}
\mathcal{A}:= \left\{(\rho, \phi): -\nabla \cdot(\epsilon(x) \nabla \phi)  = f(x)+\sum_{i=1}^{s}z_i\rho_i, \quad  \alpha \phi+\beta\frac{\partial \phi}{\partial \mathbf{n}}=\phi^b,  x\in \partial \Omega,  \rho\in [\mathcal{P}(\Omega)]^s \right\}. 
\end{equation}
For fixed $\rho^* \in [\mathcal{P}(\Omega)]^s$, and time step $\tau>0$ we set 
\begin{equation}\label{gt} 
   G_\tau(\rho, \phi) = 
\frac{1}{2\tau} d^2(\rho^*, \rho )+E(\rho, \phi), \quad (\rho, \phi)\in \mathcal{A}.
\end{equation}
In order to define a discrete sequence of approximate solutions using the minimizing scheme, we present a result on the existence of minimizers of $G_\tau$. 
To establish the uniqueness, we now prepare a technical lemma, with (iii) to be used later in the proof of Theorem \ref{thm3.1}.

\begin{lem}\label{lem2.1} Given $X^0, X^1$, let $X(\theta)=\theta X^0+(1-\theta)X^1$ for any $\theta\in (0, 1)$. \\
(i) If $X^0, X^1$ are vectors, then  
\begin{equation}\label{sq}
|X(\theta)|^2 -\theta |X^0|^2 -(1-\theta)|X^1|^2=-\theta(1-\theta)|X^1-X^0|^2.
\end{equation} 
(ii) If $X^0>0, X^1>0$ are scalars, then 
\begin{equation}\label{ent}
X(\theta) \log X(\theta) - \theta X^0 \log X^0 -(1-\theta)X^1 \log X^1
=-\theta(1-\theta)(X^1-X^0)^2 g(X^0, X^1; \theta), 
\end{equation} 
for some positive function $g$ depending on $X^0, X^1$ and $\theta$. \\
(iii) If $X^0>0, X^1>0$, $Y^0, Y^1$ are scalars, then 
\begin{equation}\label{Trans}
\frac{(Y(\theta))^2}{X(\theta)} - \theta \frac{(Y^0)^2}{X^0}  -(1-\theta)\frac{(Y^1)^2}{X^1} 
=-\theta(1-\theta)\frac{(X^1Y^0-X^0Y^1)^2}{X^0X^1X(\theta)}. 
\end{equation} 
\end{lem}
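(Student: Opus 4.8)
The plan is to prove all three parts as exact pointwise identities, treating (i) as a warm-up, (iii) by a change of variables that reduces it to a weighted version of (i), and (ii) as the genuinely analytic statement where the only real work lies.

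For (i), I would simply expand $|X(\theta)|^2=\theta^2|X^0|^2+2\theta(1-\theta)\,X^0\cdot X^1+(1-\theta)^2|X^1|^2$ and subtract $\theta|X^0|^2+(1-\theta)|X^1|^2$. Using $\theta^2-\theta=-\theta(1-\theta)$ and $(1-\theta)^2-(1-\theta)=-\theta(1-\theta)$, the left-hand side collapses to $-\theta(1-\theta)\big(|X^0|^2-2X^0\cdot X^1+|X^1|^2\big)=-\theta(1-\theta)|X^1-X^0|^2$. There is no subtlety: this is exactly the second-order remainder for the quadratic $X\mapsto|X|^2$.

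For (iii), direct expansion is possible but opaque, so I would pass to the natural mass and velocity coordinates. Set $m^0=\theta X^0$, $m^1=(1-\theta)X^1$ and $v^0=Y^0/X^0$, $v^1=Y^1/X^1$, all well-defined since $X^0,X^1>0$. Then $X(\theta)=m^0+m^1$ and $Y(\theta)=m^0v^0+m^1v^1$, while $\theta(Y^0)^2/X^0=m^0(v^0)^2$ and $(1-\theta)(Y^1)^2/X^1=m^1(v^1)^2$. The left-hand side becomes the scalar quadratic gap
\[
\frac{(m^0v^0+m^1v^1)^2}{m^0+m^1}-m^0(v^0)^2-m^1(v^1)^2=-\frac{m^0m^1}{m^0+m^1}(v^0-v^1)^2,
\]
which is the weighted analogue of (i) and is checked by clearing the denominator. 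Substituting $m^0m^1=\theta(1-\theta)X^0X^1$, $m^0+m^1=X(\theta)$ and $v^0-v^1=(X^1Y^0-X^0Y^1)/(X^0X^1)$ then recovers the claimed right-hand side.

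The analytic heart, and the step I expect to be the main obstacle, is (ii). Let $h(X)=X\log X$, strictly convex on $(0,\infty)$ since $h''(X)=1/X>0$. The left-hand side equals $-J$, where $J:=\theta h(X^0)+(1-\theta)h(X^1)-h(X(\theta))\ge 0$ is the two-point Jensen gap, strictly positive when $X^0\ne X^1$. For $X^0\ne X^1$ I would define $g:=J/\big(\theta(1-\theta)(X^1-X^0)^2\big)>0$, so that the identity holds by construction with positivity inherited from strict convexity; the only delicate point is the diagonal $X^0=X^1$, where both sides vanish and $g$ must be assigned by continuity. To see the limit is positive, and to produce a single closed form valid everywhere, I would use a Green's-function representation: setting $\Phi(\theta):=\theta h(X^0)+(1-\theta)h(X^1)-h(X(\theta))$ gives $\Phi(0)=\Phi(1)=0$ and $\Phi''(\theta)=-(X^1-X^0)^2h''(X(\theta))$, so that
\[
g=\frac{1}{\theta(1-\theta)}\int_0^1 \frac{G(\theta,s)}{\,sX^0+(1-s)X^1\,}\,ds,\qquad G(\theta,s)=\min(\theta,s)\big(1-\max(\theta,s)\big)\ge 0 .
\]
Since $G\ge 0$ and the integrand is positive, $g>0$ for all admissible $X^0,X^1,\theta$, and on the diagonal it reduces to $h''(X^0)/2=1/(2X^0)$, confirming continuity. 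The subtlety worth care here is precisely this diagonal behavior, ensuring that $g$ is a bona fide positive function rather than merely a $0/0$ quotient; the Green's-function formula settles it cleanly.
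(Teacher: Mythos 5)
Your proposal is correct, and all three verifications check out: the expansion in (i), the mass--velocity substitution in (iii) (which indeed reduces the claim to $\frac{(m^0v^0+m^1v^1)^2}{m^0+m^1}-m^0(v^0)^2-m^1(v^1)^2=-\frac{m^0m^1}{m^0+m^1}(v^0-v^1)^2$ and then back-substitutes cleanly), and the Green's-function representation in (ii). However, your treatment of part (ii) takes a genuinely different route from the paper. The paper starts from the splitting identity $X(\theta)\log X(\theta)=\theta X^0\log X(\theta)+(1-\theta)X^1\log X(\theta)$, Taylor-expands $\log X(\theta)$ to second order about $X^0$ and about $X^1$ with Lagrange (mean-value) remainders, observes that the first-order terms cancel, and reads off $g(X^0,X^1;\theta)=\frac{(1-\theta)X^0}{(\tilde X^0)^2}+\frac{\theta X^1}{(\tilde X^1)^2}>0$, where $\tilde X^0,\tilde X^1$ are intermediate points between the endpoints and $X(\theta)$. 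Your argument instead identifies the left side of (\ref{ent}) as the negative Jensen gap of the strictly convex function $h(X)=X\log X$ and represents that gap via the Dirichlet Green's function of $-d^2/d\theta^2$ on $[0,1]$, yielding $g$ as an explicit integral. What the paper's approach buys is brevity and elementarity (two Taylor expansions and a cancellation); what yours buys is generality --- the same representation works verbatim for any $h$ with $h''>0$ --- together with a single closed-form $g$ that is manifestly positive and continuous through the diagonal $X^0=X^1$ (where it equals $1/(2X^0)$), whereas the paper's $g$ involves unspecified mean-value points and is not discussed at $X^0=X^1$. Your explicit treatments of (i) and (iii) fill in what the paper dismisses as ``direct calculation''; in particular the reduction of (iii) to a weighted two-point identity is a tidier organization of that computation and mirrors the kinetic-energy convexity underlying the Benamou--Brenier functional used later in the paper.
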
 
\begin{proof}
We only prove  (ii); for (i) and (iii) can be verified by a direct calculation. Note that 
\begin{equation}\label{ID1}
 X(\theta) \log X(\theta)= \theta X^0 \log (X(\theta))+ (1-\theta)X^1\log (X(\theta)). 
\end{equation}
Taylor's  expansion of  $\log (X(\theta)$ at $X^0$ and $X^1$, respectively, gives 
\begin{equation*}\label{ID2}
 \log (\theta X^0+(1-\theta)X^1)=  \log ( X^0) +\frac{1}{X^0}(1-\theta)(X^0-X^1)-\frac{(1-\theta)^2(X^1-X^0)^2}{(\tilde{X}^0)^2}, 
\end{equation*}
where $\tilde{X}^0$ in between $X^0$ and $X(\theta)$, and 
\begin{equation*}\label{ID3}
 \log (\theta X^0+(1-\theta)X^1)=  \log ( X^1) +\frac{1}{X^1}+\theta(X^1-X^0)-\frac{\theta^2(X^1-X^0)^2}{(\tilde{X}^1)^2}, 
\end{equation*}
where $\tilde{X}^1$ in between $X^1$ and $X(\theta)$. Substituting these into the right hand side of (\ref{ID1}) leads to
\begin{align*}
 X(\theta) \log X(\theta)=& \theta X^0\log X^0-\theta (1-\theta)^2\frac{X^0}{(\tilde X^0)^2}(X^1-X^0)^2\\
 & + (1-\theta)X^1\log X^1 -  \theta^2 (1-\theta)\frac{X^1}{(\tilde X^1)^2}(X^1-X^0)^2,
\end{align*}
this completes the proof of (ii) by defining $g(X^0,X^1, \theta)=\frac{(1-\theta) X^0}{(\tilde{X}^0)^2}+\frac{\theta X^1}{(\tilde{X}^1)^2}>0$.
\end{proof}

\begin{thm} \label{thm2.3} (Existence of minimizers) Fix $\tau>0$, and $\rho^* \in [\mathcal{P}(\Omega)]^s$. Then the functional $G_\tau(\rho, \phi)$ admits a unique minimizer on $\mathcal{A}$. 
\end{thm}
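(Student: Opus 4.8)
The plan is to use the direct method of the calculus of variations for existence, and the (strict) convexity of $G_\tau$ together with the identities of Lemma \ref{lem2.1} for uniqueness. A useful first reduction: under the standing assumptions ($\epsilon(x)\geq a>0$, $\beta_R>0$, $\phi_D^b=0$) the Poisson equation together with the boundary condition defining $\mathcal{A}$ is uniquely solvable by Lax--Milgram, so $\phi$ is an affine function of $\rho$, say $\phi=\phi[\rho]$. Hence $G_\tau$ may be viewed as a functional of $\rho$ alone, $\tilde G_\tau(\rho)=\frac{1}{2\tau}d^2(\rho^*,\rho)+E(\rho,\phi[\rho])$, which streamlines both parts.

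For existence, I would first note that $G_\tau$ is bounded below: since $d^2(\rho^*,\rho)\geq 0$ and Theorem \ref{lb} gives $E\geq c_0\|\phi\|_{H^1}^2-c_1$, we obtain $G_\tau\geq -c_1$ together with coercivity in $\|\phi\|_{H^1}$. Taking a minimizing sequence $(\rho^k,\phi^k)\in\mathcal{A}$, coercivity bounds $\phi^k$ in $H^1(\Omega)$, so a subsequence converges weakly in $H^1$ and strongly in $L^2(\Omega)$ and $L^2(\partial\Omega)$ by Rellich--Kondrachov and compactness of the trace. The uniform entropy bound $\int_\Omega\rho_i^k\log\rho_i^k\leq C$ yields equi-integrability (de la Vall\'ee-Poussin), hence weak $L^1$ convergence of $\rho^k$ to limits that remain probability densities since $\Omega$ is bounded and no mass can escape. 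I would then verify lower semicontinuity term by term: the entropy is convex and weakly l.s.c.; $\frac12\int_\Omega\epsilon|\nabla\phi|^2\,dx$ and $\frac{1}{2\beta_R}\int_{\Gamma_R}\phi^2\,ds$ are weakly l.s.c.; and $d^2(\rho^*,\cdot)$, written in the momentum variables $m_i=\rho_i u_i$ as $\sum_i\int_0^1\int_\Omega D_i^{-1}|m_i|^2/\rho_i$, is jointly convex and l.s.c. in $(\rho,m)$ (Benamou--Brenier \cite{BB00}). Finally the limit lies in $\mathcal{A}$ because the Poisson equation and the boundary condition are linear and pass to the weak limit, and $\rho\in[\mathcal{P}(\Omega)]^s$ is preserved; this produces a minimizer.

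For uniqueness, the point is that $\mathcal{A}$ is convex: the Poisson constraint and boundary condition are linear in $(\rho,\phi)$ and $[\mathcal{P}(\Omega)]^s$ is convex. Given two minimizers $(\rho^0,\phi^0)$ and $(\rho^1,\phi^1)$, their $\theta$-interpolation $(\rho(\theta),\phi(\theta))$ is therefore admissible. Lemma \ref{lem2.1}(iii), applied to the momentum representation of the distance, shows $d^2(\rho^*,\cdot)$ is convex along this interpolation; Lemma \ref{lem2.1}(i) shows $\frac12\int_\Omega\epsilon|\nabla\phi|^2\,dx$ is convex; and Lemma \ref{lem2.1}(ii) shows the entropy is \emph{strictly} convex whenever $\rho^0\neq\rho^1$ on a set of positive measure. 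Hence $G_\tau(\rho(\theta),\phi(\theta))<\theta G_\tau(\rho^0,\phi^0)+(1-\theta)G_\tau(\rho^1,\phi^1)$, contradicting minimality unless $\rho^0=\rho^1$; then $\phi^0=\phi^1=\phi[\rho^0]$ by unique solvability of the Poisson problem, giving uniqueness.

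The main obstacle I anticipate is the lower semicontinuity and well-posedness of the distance term $d^2(\rho^*,\rho)$ under the weak convergence available for $\rho^k$: the functional $\int D_i^{-1}|m_i|^2/\rho_i$ is degenerate where $\rho_i$ vanishes, so one must work with its convex, one-homogeneous relaxation and argue joint l.s.c. in $(\rho,m)$ rather than in $(\rho,u)$, while ensuring that optimal paths connecting $\rho^*$ to the weak limit actually exist. Controlling the densities so that mass is not lost in the limit (genuine weak-$L^1$ convergence rather than mere vague convergence of measures), so that both the entropy and the distance behave well, is the delicate step; the entropy bound and the boundedness of $\Omega$ are exactly what make this work.
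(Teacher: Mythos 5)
Your proposal is correct and follows essentially the same route as the paper's proof: the direct method for existence (lower bound and coercivity from Theorem \ref{lb}, Dunford--Pettis/equi-integrability giving weak $L^1$ convergence of $\rho^k$, weak $H^1$ convergence of $\phi^k$, term-by-term lower semicontinuity including the Benamou--Brenier momentum formulation of the transport cost), and uniqueness via strict joint convexity on the convex set $\mathcal{A}$ using Lemma \ref{lem2.1}. Your refinements --- the Lax--Milgram reduction $\phi=\phi[\rho]$, the explicit de la Vall\'ee-Poussin argument for uniform integrability, and the explicit convexity of $d^2(\rho^*,\cdot)$ via Lemma \ref{lem2.1}(iii), which the paper leaves implicit in its uniqueness step --- are sound details filled in on top of the same skeleton rather than a different approach.
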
 
\begin{proof}  By Theorem \ref{lb}, $G_\tau$ is bounded from below on $\mathcal{A}$, hence there is a minimizing sequence $(\rho^k, \phi^k)$ and $\rho^k$ is tight and uniformly integrable. By the Dunford--Pettis Theorem
one may extract a subsequence such that
$\rho^k \to \rho$ in $L^1(\Omega)$, 
which together with $\rho^k \in [\mathcal{P}(\Omega)]^s$ ensure that $\rho \in [\mathcal{P}(\Omega)]^s$.
In addition,  $E(\rho, \cdot)$ is also coercive in $\phi$ because of (\ref{coer}), i.e.,    
$$
E(\rho, \phi)\geq c_0\|\phi\|^2_1-c_1.
$$ 
Hence one may extract a subsequence  such that $\phi^k \to \phi$ weakly in $H^1(\Omega)$. The weak $L^1$ lower semi-continuity (l.s.c.) of the squared Wasserstein distance can be easily adapted to the present case. 
The lower semicontinuity of $E$ with respect to weak convergence can be seen from the following inequality 
$$
E(\rho^k, \phi^k)\geq E(\rho, \phi)
+\int_{\Omega} \left[\sum_{i=1}^s {\rm ln} \rho_i(\rho_i^k-\rho_i)+\epsilon(x) \nabla \phi\cdot (\nabla \phi^k-\nabla \phi) \right]dx +
\frac{\alpha}{\beta}\int_{\partial \Omega} \phi(\phi^k-\phi)ds. 
$$
 Putting all these together  we claim that the limit is a minimizer. 

Finally, the uniqueness comes from the fact that the admissible set $\mathcal{A}$ is convex w.r.t. linear interpolation and that the total free energy is jointly strictly convex in $(\rho, \phi)$ on  $\mathcal{A}$.  More precisely, we argue as follows. Let $\theta \in (0, 1)$, then $\rho(\theta)=\theta \rho^0 +(1-\theta)\rho^1$ is a convex linear combination for $\rho^0$ and $\rho^1$. Let $\phi^0$ and $\phi^1$ be obtained from the Possion equation, corresponding to $\rho^0$ and $\rho^1$, respectively. Then 
$
\phi(\theta) =\theta \phi^0 +(1-\theta)\phi^1
$
must be the solution to the Poisson equation corresponding to $\rho(\theta)$. For the energy of form (\ref{energy+}), we evaluate $E(\rho(\theta), \phi(\theta))$ term by term to determine whether it is strictly convex. Using (\ref{ent}) for $\rho^l_i=X^l$ and (\ref{sq}) for
$X^l=\nabla \phi^l$ in $\Omega$, and 
(\ref{sq}) for $X^l=\phi^l$ on $\Gamma_R$, respectively, we obtain 
\begin{align*} 
E(\rho(\theta),\phi(\theta)) - \theta E(\rho^0, \phi^0) -(1-\theta)E(\rho^1, \phi^1) = -\theta(1-\theta)I
\end{align*} 
with 
$$
I=  \int_{\Omega}\bigg( \sum_{i=1}^s (\rho_i^1-\rho_i^0)^2g(\rho_i^0, \rho^1_i;\theta)  +\frac{1}{2}\epsilon(x)|\nabla (\phi^1-\phi^0)|^2 \bigg )d x + \frac{1}{2\beta_R} \int_{\Gamma_R} (\phi^1-\phi^0)^2 ds. 
$$
Convexity  of $E$ follows from $I\geq 0$. Actually this inequality is strict, unless $\rho^0 =\rho^1, \phi^0=\phi^1$, which can be derived from letting $I=0$. Hence $E(\rho, \phi)$ is strictly convex under two linear constraints. 
\end{proof} 

We are now ready to present a variational scheme formulation -- a JKO-type scheme for (\ref{PNP1F}): given time step $\tau>0$, recursively we define a sequence $\{\rho^n, \phi^n\}$ by 
\begin{equation}\label{JKO2}
\begin{aligned}
\rho^0= \rho^{\rm in}, \quad  (\rho^{n+1}, \phi^{n+1}) = &\arg \min_{(\rho, \phi)\in \mathcal{A}} \left\{ \frac{1}{2\tau} d^2(\rho^n, \rho )+E(\rho, \phi) \right\}.
\end{aligned}
\end{equation} 

\begin{thm}\label{thm2.4}(Solution properties of scheme (\ref{JKO2}) ) \\
(i) (Probability-preserving) If $\rho^n \in [\mathcal{P}(\Omega)]^s$, so is $\rho^{n+1}$; \\
(ii) (Unconditionally energy stability) the inequality 
$$
E(\rho^{n+1},\phi^{n+1}) +\frac{1}{2\tau}d^2(\rho^n, \rho^{n+1})\leq E(\rho^n, \phi^{n})
$$
holds for any $n\geq 0$. Furthermore, 
\begin{equation}\label{d2}
\sum_{n=0}^\infty d^2(\rho^n, \rho^{n+1})\leq 2\tau( E(\rho^0, \phi^{0}) - \inf_{(\rho, \phi)\in \mathcal{A}}E(\rho, \phi)).
\end{equation}
\end{thm}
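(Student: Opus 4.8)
The plan is to prove (i) directly from the structure of the admissible set together with the existence result, and to prove (ii) by the standard comparison argument for minimizing-movement schemes, closing it with the lower bound of Theorem \ref{lb}.

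For part (i), I would first note that whenever $\rho^n \in [\mathcal{P}(\Omega)]^s$ the distance $d^2(\rho^n, \cdot)$ is well defined through the set $K$ of (\ref{Kset}), so the functional $G_\tau$ in (\ref{gt}) makes sense and, by Theorem \ref{thm2.3}, possesses a unique minimizer $(\rho^{n+1}, \phi^{n+1})$ over $\mathcal{A}$. Since membership in $\mathcal{A}$ already encodes the constraint $\rho \in [\mathcal{P}(\Omega)]^s$ (see (\ref{Aset})), the minimizer automatically satisfies $\rho^{n+1} \in [\mathcal{P}(\Omega)]^s$. Starting from $\rho^0 = \rho^{\rm in} \in [\mathcal{P}(\Omega)]^s$, an induction on $n$ then keeps the whole sequence in the probability space, which delivers mass conservation and positivity at once.

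For the one-step inequality in (ii), the key observation is that the previous iterate $(\rho^n, \phi^n)$ is itself an admissible competitor, i.e. $(\rho^n, \phi^n) \in \mathcal{A}$: by construction $\phi^n$ solves the Poisson constraint associated with $\rho^n$, taking $\phi^0$ to be the solution corresponding to $\rho^{\rm in}$ at the initial step. Moreover the constant-in-time curve $\rho_i(x,t) \equiv \rho_i^n(x)$ with velocity $u_i \equiv 0$ lies in $K$ and yields zero cost in (\ref{WD}), so $d^2(\rho^n, \rho^n) = 0$. Testing the minimality of $(\rho^{n+1}, \phi^{n+1})$ against this competitor gives
$$
\frac{1}{2\tau} d^2(\rho^n, \rho^{n+1}) + E(\rho^{n+1}, \phi^{n+1}) \le \frac{1}{2\tau} d^2(\rho^n, \rho^n) + E(\rho^n, \phi^n) = E(\rho^n, \phi^n),
$$
which is exactly the asserted dissipation inequality. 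To obtain (\ref{d2}), I would rearrange this as $\tfrac{1}{2\tau} d^2(\rho^n,\rho^{n+1}) \le E(\rho^n,\phi^n) - E(\rho^{n+1},\phi^{n+1})$ and sum over $n = 0, \dots, N$; the energy terms telescope, leaving
$$
\frac{1}{2\tau}\sum_{n=0}^N d^2(\rho^n, \rho^{n+1}) \le E(\rho^0, \phi^0) - E(\rho^{N+1}, \phi^{N+1}) \le E(\rho^0, \phi^0) - \inf_{(\rho,\phi)\in \mathcal{A}} E(\rho,\phi).
$$
Letting $N \to \infty$ and multiplying by $2\tau$ yields (\ref{d2}).

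This is a routine minimizing-movement argument, so there is no serious obstacle; the only points needing care are verifying $d^2(\rho^n,\rho^n)=0$ via the trivial curve — which relies on the structure of $K$ and the homogeneity of the cost in $u_i$ — and ensuring the infimum of $E$ over $\mathcal{A}$ is finite. The latter is precisely where Theorem \ref{lb} is indispensable: without the lower bound the telescoping estimate would be vacuous, whereas with it the right-hand side of (\ref{d2}) is finite and nonnegative, so in particular $d^2(\rho^n,\rho^{n+1}) \to 0$.
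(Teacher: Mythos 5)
Your proposal is correct and follows essentially the same route as the paper's proof: part (i) from the constraint $\rho\in[\mathcal{P}(\Omega)]^s$ built into $\mathcal{A}$, and part (ii) by testing minimality of $(\rho^{n+1},\phi^{n+1})$ against the competitor $(\rho^n,\phi^n)$ using $d^2(\rho^n,\rho^n)=0$, then telescoping. You merely fill in details the paper leaves implicit (admissibility of the previous iterate, the zero-cost constant curve in $K$, and finiteness of $\inf_{\mathcal{A}}E$ via Theorem \ref{lb}), all of which are accurate.
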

\begin{proof} 
(i) The constraint $\mathcal{A}$ ensures that $\rho^n\in [\mathcal{P}(\Omega)]^s$ which is inherited from initial data; namely the method is both positivity and mass preserving. \\
(ii) 
From the definition of the minimizer, it follows
$$
E(\rho^{n+1},\phi^{n+1}) +\frac{1}{2\tau}d^2(\rho^n, \rho^{n+1})\leq E(\rho^n, \phi^{n}). 
$$
Here we used $d^2(\rho, \rho)=0$ for any $\rho \in  [\mathcal{P}(\Omega)]^s$. 
Finally, summation over $n$ yields (\ref{d2}). 
\end{proof} 

\subsection{Semi-discrete JKO scheme}
We proceed to obtain a computable formulation. Let $m_i=\rho_i u_i$,  the dynamic formulation of the distance $d^2(\cdot,\cdot)$ in (\ref{JKO2}) can be expressed as: given $\rho^n(x)$, we have 
\begin{equation}\label{JKO3}
\begin{aligned}
 (\rho^{n+1},\phi^{n+1})= &\arg \min_{(\rho, \phi) \in \mathcal{A},m} \left\{ \frac{1}{2\tau} \sum_{i=1}^s  \int_0^1\int_{\Omega}F(\rho_i, m_i)D^{-1}_i dx dt+E(\rho(\cdot, 1), \phi(\cdot, 1)) \right\},\\
 & \text{ s.t. } \quad   \partial_t \rho_i + \nabla\cdot (m_i) =0,\quad m_i\cdot \mathbf{n}  = 0, \quad x\in \partial \Omega, \; \rho(x,0)=\rho^n.
\end{aligned}
\end{equation} 
Here $t$ is an artificial time, and 
$$
F(\rho_i, m_i) 
=
\left\{
\begin{array}{ll}
\frac{ |m_i|^2}{\rho_i} & \text{if}\; \rho_i>0,\\
0 & \text{if}\; (\rho_i, m_i)=(0, 0),\\
+\infty & \text{otherwise}.
\end{array}
\right.
$$
The use of $m_i$ has enhanced the functional convexity in $m_i$ and made the transport constraint linear (see Breiner \cite{BB00}), yet causing difficulties for solutions near $\rho_i=0$. We shall prove for the fully discrete case positivity of $\rho_i^n$ is preserved for all $n$.  Another computational overhead with 
(\ref{JKO3}) is dealing with the artificial time $t\in [0,1]$ which is induced by the optimal transport flow.  
To overcome this issue, we follow \cite{LWC19} with a local approximation in the artificial time : approximate the derivative in $t$ in the constraint transport equation by a one step
difference and the integral in time in the objective function by a one term quadrature. We thus obtain the following scheme: 
\begin{equation}\label{JKO1}
\begin{aligned}
 (\rho^{n+1},\phi^{n+1})= &\arg\min_{(\rho,\phi)\in \mathcal{A},m} \left\{ \frac{1}{2\tau} \sum_{i=1}^s  \int_{\Omega} F(\rho_i, m_i)D^{-1}_i dx +E(\rho, \phi) \right\},\\
 & \text{ s.t. } \quad    \rho_i-\rho_i^n + \nabla\cdot (m_i) =0,\quad m_i\cdot \mathbf{n}  = 0, \quad x\in \partial \Omega. 
\end{aligned}
\end{equation} 
\begin{thm}\label{thm2.5} The positive minimizer of the variational
problem (\ref{JKO1}) is a first-order time consistent scheme for the PNP system.
\end{thm}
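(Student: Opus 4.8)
The plan is to derive the first-order optimality (Karush--Kuhn--Tucker) conditions for the constrained problem (\ref{JKO1}) and to show that, to leading order in $\tau$, they reproduce a backward-Euler discretization of (\ref{PNP1F}). First I introduce a Lagrange multiplier $\psi_i$ for each scalar constraint $\rho_i-\rho_i^n+\nabla\cdot m_i=0$ and form the Lagrangian
\[
L=\frac{1}{2\tau}\sum_{i=1}^s\int_\Omega \frac{|m_i|^2}{\rho_i}D_i^{-1}\,dx+E(\rho,\phi)+\sum_{i=1}^s\int_\Omega \psi_i\big(\rho_i-\rho_i^n+\nabla\cdot m_i\big)\,dx,
\]
where $\phi=\phi[\rho]$ is slaved to $\rho$ through the Poisson equation encoded in $\mathcal{A}$. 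Integrating the multiplier term by parts and using $m_i\cdot\mathbf{n}=0$ on $\partial\Omega$ removes the boundary contribution, so stationarity in $m_i$ gives the momentum relation $m_i=\tau D_i\rho_i\nabla\psi_i$, while stationarity in $\rho_i$ (using the standard identity $\delta_{\rho_i}E=\log\rho_i+1+z_i\phi$, which already accounts for the implicit dependence of $\phi$ on $\rho$) gives
\[
-\frac{1}{2\tau}D_i^{-1}\frac{|m_i|^2}{\rho_i^2}+\log\rho_i+1+z_i\phi+\psi_i=0 .
\]

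Next I eliminate $m_i$. Substituting $m_i=\tau D_i\rho_i\nabla\psi_i$ into the scalar relation yields $\psi_i=-(\log\rho_i+1+z_i\phi)+\tfrac{\tau}{2}D_i|\nabla\psi_i|^2$, whence $\nabla\psi_i=-\nabla(\log\rho_i+z_i\phi)+O(\tau)$. Feeding the momentum relation back into the transport constraint produces the update
\[
\frac{\rho_i-\rho_i^n}{\tau}=-\nabla\cdot\big(D_i\rho_i\nabla\psi_i\big)=\nabla\cdot\big(D_i(\nabla\rho_i+z_i\rho_i\nabla\phi)\big)+O(\tau),
\]
where I used $\rho_i\nabla\log\rho_i=\nabla\rho_i$. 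Evaluated at the minimizer $(\rho^{n+1},\phi^{n+1})$ this is precisely a backward-Euler step for the NP equation (\ref{PNP}) with an $O(\tau)$ remainder; the Poisson equation and the Dirichlet/Neumann/Robin data hold exactly because $(\rho^{n+1},\phi^{n+1})\in\mathcal{A}$, and $m_i\cdot\mathbf{n}=0$ translates, via the momentum relation and the positivity of $\rho_i^{n+1}$, into the no-flux condition $D_i(\nabla\rho_i^{n+1}+z_i\rho_i^{n+1}\nabla\phi^{n+1})\cdot\mathbf{n}=0$. Inserting a smooth exact solution of (\ref{PNP1F}) into this relation, the difference quotient $(\rho_i^{n+1}-\rho_i^n)/\tau$ matches $\partial_t\rho_i$ up to $O(\tau)$, and the nonlinear correction $\tfrac{\tau}{2}\nabla\cdot(D_i\rho_i\nabla(D_i|\nabla\psi_i|^2))$ is itself $O(\tau)$, so the local truncation error is $O(\tau)$; that is, the scheme is first-order time consistent.

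The main obstacle I expect is the rigorous control of the $O(\tau)$ remainder rather than the formal computation. The correction carries $|\nabla\psi_i|^2$, so bounding it requires that the minimizer be strictly positive and sufficiently regular for $\nabla\psi_i=-\nabla(\log\rho_i+z_i\phi)+O(\tau)$ to be bounded uniformly in $\tau$; this is exactly where the positivity asserted in the statement (and established at the fully discrete level in Theorem \ref{thm3.2}) is indispensable. A second point requiring care is the derivation of $\delta_{\rho_i}E$ through the boundary-corrected energy (\ref{energy}): one must verify that the Dirichlet and Robin surface terms, together with the slaving $\phi=\phi[\rho]$, cancel appropriately so that the clean identity $\delta_{\rho_i}E=\log\rho_i+1+z_i\phi$ survives and the natural boundary condition $\partial_{\mathbf n}\psi_i=0$ is consistent with $m_i\cdot\mathbf{n}=0$.
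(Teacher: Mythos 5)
Your proof is correct, and its skeleton --- Lagrangian/KKT conditions, the momentum relation $m_i=\tau D_i\rho_i\nabla\psi_i$, elimination of the multiplier, and Taylor expansion in $\tau$ to recover a backward-Euler step --- matches the paper's. The genuine difference is in how the electrostatic coupling is handled. You slave $\phi=\phi[\rho]$ through the Poisson equation and invoke the self-consistent variational derivative $\delta_{\rho_i}E=\log\rho_i+1+z_i\phi$, in which the half coefficient $\tfrac12 z_i\phi$ coming from the energy density is doubled by the symmetry (self-adjointness) of the Poisson solve. The paper instead keeps $\phi$ as an independent unknown, adds a multiplier $v_{s+1}$ for the Poisson constraint and a multiplier $\xi$ for the boundary condition, so its $\rho_i$-stationarity condition carries $\tfrac12 z_i\phi+v_i+z_iv_{s+1}$; the heart of its proof is then showing $v_{s+1}=\tfrac12\phi+\mathrm{const}$ via a case-by-case boundary-term analysis ($\beta\neq 0$ and $\beta=0$, using the correction term $B$) and uniqueness for the homogeneous elliptic problem satisfied by $\psi=\tfrac12\phi-v_{s+1}$. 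The two routes are mathematically equivalent: the paper's $v_{s+1}$ argument is precisely the explicit verification of the identity you quote as standard, including the surface terms from the boundary-corrected energy (\ref{energy}) --- which you correctly flag as the delicate point but do not carry out. What your route buys is brevity and a cleaner statement of the mechanism (Green's-function symmetry doubles the coupling term); what the paper's route buys is a self-contained treatment of the mixed Dirichlet/Neumann/Robin data, plus the natural boundary condition on the multiplier, without appealing to an external identity. Your closing remarks on rigorous control of the $O(\tau)$ remainder and on positivity via Theorem \ref{thm3.2} go somewhat beyond the paper, whose argument, like yours, is formal and assumes a positive minimizer.
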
 
\begin{proof}
Let (\ref{JKO1}) admit a minimizer with $\rho>0$. We can derive optimal conditions by the Lagrange multiplier method. Define the Lagrangian as 
\begin{align*}
 L(\rho,\phi,m, v, \xi) :=&\frac{1}{2\tau} \sum_{i=1}^s \int_{\Omega}  F(\rho_i, m_i)D^{-1}_i  dx  +E(\rho,\phi)+\int_{\partial \Omega} \xi (\alpha \phi +\beta \partial_n \phi -\phi^b)ds \\
 & +\sum_{i=1}^s\int_{\Omega}  v_i(\rho_i-\rho_i^n+\nabla \cdot m_i)dx+\int_{\Omega}v_{s+1}(f+\sum_{i=1}^sz_i\rho_i+\nabla \cdot (\epsilon(x)\nabla \phi)) dx.
\end{align*}
The optimality conditions for $x\in \Omega$ are 
\begin{align*}
    \frac{\delta L}{\delta \rho_i}=0 \quad & \text{ implies } & -\frac{1}{2\tau}\frac{||m_i||^2}{\rho_i^2}D^{-1}_i+\log (\rho_i)+1+\frac{1}{2}z_i\phi+v_i+z_iv_{s+1}=0,& \quad i=1,\cdots, s, \\
    \frac{\delta L}{\delta\phi}=0 \quad & \text{ implies } & \frac{1}{2}(f+\sum_{i=1}^sz_i\rho_i)+\nabla\cdot (\epsilon(x)\nabla v_{s+1})=0, \\
    \frac{\delta L}{\delta m_i}=0 \quad & \text{ implies } & \frac{1}{\tau}\frac{m_i}{\rho_i}D_i^{-1}-\nabla\cdot v_i=0,& \quad i=1,\cdots, s, \\
     \frac{\delta L}{\delta v_i}=0 \quad & \text{ implies } & \rho_i-\rho_i^n+\nabla\cdot m_i=0,& \quad i=1,\cdots, s, \\ 
    \frac{\delta L}{\delta v_{s+1}}=0 \quad & \text{ implies } & f+\sum_{i=1}^sz_i\rho_i+\nabla \cdot (\epsilon(x)\nabla \phi)=0.
\end{align*}
For $x\in \Omega$, we thus have 
$$
v_i=
\frac{1}{2\tau}\frac{||m_i||^2}{\rho_i^2}D^{-1}_i
-\log (\rho_i)-1-\frac{1}{2}z_i\phi-z_iv_{s+1}, \quad  m_i=\tau D_i\rho_i\nabla v_i
$$
and 
$$
\nabla\cdot (\epsilon(x)\nabla v_{s+1})=\frac{1}{2} \nabla\cdot (\epsilon(x)\nabla \phi). 
$$
On $\partial \Omega$, from integrating by parts in calculating $\delta L$ there remain the following boundary terms 
$$
 \int_{\partial \Omega}  \epsilon(x) \delta (\partial_n\phi) v_{s+1} ds - \int_{\partial \Omega}  \epsilon(x) \delta \phi \partial_n v_{s+1}ds+
  \int_{\partial \Omega} v_i  \delta m_i\cdot \mathbf{n}ds,  
$$
where the last  term vanishes due to the constraint 
$
m_i\cdot \mathbf{n}=0.
$
In addition, we need also consider terms arising from  
$$
\delta B+\delta \int_{\partial \Omega} \xi (\alpha \phi +\beta \partial_n \phi -\phi^b)ds.
$$
Upon careful regrouping, we have two cases to distinguish:  \\
(i) for $\beta\not= 0$, the correction term $B$ in the energy (\ref{Intro-Energy}) is given by
$$
B=\frac{1}{2\beta} \int_{\partial \Omega} \phi^b \phi ds.
$$
We obtain 
$$
\epsilon(x) v_{s+1} +\beta \epsilon(x) \xi =0, \quad 
-\epsilon(x)\partial_n v_{s+1} +\alpha \xi +\frac{1}{2\beta} \phi^b=0, \quad \text{on}\; \partial \Omega; 
$$
(ii) For $\beta=0$, \\
The correction term $B$ in the energy (\ref{Intro-Energy}) is given by
$$
B=-\frac{1}{2\alpha} \int_{\partial \Omega} \epsilon(x)\phi^b \partial_n\phi ds.
$$
from which we have 
$$
\epsilon(x)v_{s+1} -\frac{1}{2\alpha} \epsilon(x) \phi^b +\beta \xi=0, \quad 
-\epsilon(x)\partial_n v_{s+1}  +\alpha \xi=0, \quad \text{on}\; \partial \Omega.
$$
These ensure that we always have 
$$
\alpha v_{s+1}+\beta \partial_n v_{s+1}=\frac{1}{2}\phi^b \quad \text{on}\; \partial \Omega.
$$
Take $\psi=\frac{1}{2}\phi-v_{s+1}$ we have 
$$
\nabla \cdot (\epsilon(x)\nabla \psi)=0, x\in \Omega; \alpha \psi +\beta \partial_n \psi=0 \; \text{on}\; \partial \Omega.
$$
By the uniqueness of the Poisson problem we conclude $\psi\equiv 0$ or 
$\psi={\rm cost}$ if $\alpha=0$,  i.e.,   
$$
v_{s+1}\equiv \frac{1}{2}\phi+cost.  
$$
Combing the above we have the following update  
$$
\rho_i=\rho^n_i+\tau \nabla \cdot \left(D_i\rho_i \nabla (\log(\rho_i)+z_i\phi)\right)+O(\tau^2).
$$
This says scheme (\ref{JKO1}) is a first order time discretization of the PNP system (\ref{PNP1F}).
\end{proof}
\begin{rem} A natural question arises: is the discrete transport still preserves positivity of $\rho_i$.  We shall address this issue for the fully discrete scheme, for which positivity propagation is rigorously established in Theorem \ref{thm3.2}.  
\end{rem}

\section{Numerical method}
In this section, we detail the spatial discretization. The underlying principle for spatial discretization is to preserve the structure of Wasserstein metric tensor in the discrete sense. 


\subsection{Spatial discretization}
We only consider the discretization in one dimensional setting. Let $\Omega=[a, \ b]$ be the computational domain partitioned into $N$ cells $I_j=[x_{j-\frac{1}{2}}, \ \ x_{j+\frac{1}{2}}]$,  with mesh size $h=(b-a)/N$ and cell center at $x_j=x_{j-\frac{1}{2}}+\frac{1}{2}h$, $j\in \{1,2,\cdots, N\}.$  Let numerical solution be 
$\{\phi_{j}\}_{j=1}^{N}$, $\{\rho_{ij}\}_{j=1}^N$, and $ \{m_{i, j+1/2}\}_{j=1}^{N-1}$ on two grids $x_j$ and $x_{j+1/2}$, respectively.  We define the difference operator by 
$$
(D_h v)_{j+1/2}:=\frac{v_{j+1}-v_j}{h}, \quad (d_h v)_j =\frac{v_{j+1/2}-v_{j-1/2}}{h}
$$
and average operator by 
$$
\hat v_j=\frac{v_{j+1/2}+v_{j-1/2}}{2}. 
$$
We also use $\epsilon_{j+1/2}=\epsilon(x_{j+1/2})$, $f_j=f(x_j)$, and $D_{ij}=D_i(x_j)$.

The transport constraint is discretized with central difference
in space as follows: 
\begin{equation}\label{dt}
\rho_{ij}-\rho^n_{ij}+d_h(m_i)_j=0, 
\end{equation} 
and the zero boundary conditions  $m_{i,1/2}=m_{i,N+1/2}=0$ are applied.  

For the Possion equation, we consider the Robin boundary condition at both ends, other types of boundary conditions can be handled in same fashion.  We introduce two ghost values $\phi_0$ and $\phi_{N+1}$ for conveniently approximating the boundary condition (\ref{BPO}) with center differences: 
\begin{equation}\label{GP}
\frac{\phi_0+\phi_1}{2}-\beta_a \epsilon(a) \frac{\phi_1-\phi_0}{h} = \phi^b(a), \quad  \frac{\phi_{N+1}+\phi_N}{2}+\beta_b \epsilon(b) \frac{\phi_{N+1}-\phi_N}{h}=\phi^b(b).
\end{equation}
This together with the center difference approximation of the Poisson equation  gives a coupled linear system:  
\begin{equation}\label{dp}
\begin{aligned}
& (h+2\beta_a \epsilon(a)) \phi_0 + (h-2\beta_a \epsilon(a)) \phi_1-2h \phi^b(a)=0, \\  
& -d_h(\epsilon D_h \phi)_j -f_j - \sum_{i=1}^s z_i\rho_{ij}=0,  \quad j=1,\cdots, N,\\
&  (h-2\beta_b\epsilon(b))\phi_N + (h+2\beta_b\epsilon(b)) \phi_{N+1}-2h \phi^b(b)=0.  
\end{aligned}
\end{equation}
We denote such linear constraint by  $L_h(\phi, \rho)=0$. 
The objective function then writes as 
\begin{equation}\label{DOB+}
\begin{aligned}
 \mathcal{F}_h(\rho, m, \phi) 
 = & \frac{h}{2\tau} \sum_{j=1}^N \sum_{i=1}^{s} \frac{   \hat {m}^2_{i,j}      }{\rho_{i,j}} D^{-1}_{i,j } +h\sum_{j=1}^{N} \left( \sum_{i=1}^s \rho_{i,j}\log \rho_{i,j} +\frac{\epsilon_j}{8h^2} (\phi_{j+1}-\phi_{j-1})^2\right)\\
& +\frac{1}{8\beta_a}(\phi_0+\phi_1)^2+\frac{1}{8\beta_b}(\phi_N+\phi_{N+1})^2,
     \end{aligned}
\end{equation}
which is a second order spatial approximation of the objective functional in (\ref{JKO1}). 

To formulate an admissible set for the discrete minimization problem, let the discrete probability distribution set be: for $\delta>0$
$$
P_{h, \delta}=\left\{ (\rho_1, \cdots, \rho_N): \quad  \rho_j \geq \delta, \quad  h\sum_{j=1}^N \rho_j=1 \right\}.   $$
Then the constraint set for $(\rho, \phi)$ becomes 
$$
\mathcal{A}_{h, \delta}=\{(\rho, \phi): \quad \rho \in [P_{h, \delta}]^s, \quad L_h(\phi, \rho)=0\}. 
$$
Thus the admissible set for all $(\rho, m, \phi)$ collectively can be written as 
$$
V_{h, \delta}^n=\{ (\rho, m, \phi): \quad 
\rho_{ij}-\rho_{ij}^n +d_h (m_i)_j=0, \quad (\rho, \phi)\in \mathcal{A}_{h, \delta}\}
$$
with $m_{i,1/2}=m_{i,N+1/2}=0$. Thus we have 
$$
V_{h, \delta}^n \subset  \mathbb{R}^{s(2N-1)+N+2}.
$$
The one time update with the fully discrete scheme is to find 
\begin{equation}\label{fKO}
\begin{aligned}
\rho^{n+1}= \arg &\min _{u \in V_{h, \delta}^n} \bigg\{  \mathcal{F}_h(u)  \bigg\}, \quad u:=(\rho, m, \phi).
\end{aligned}
\end{equation}
\begin{thm} \label{thm3.1}(Unique solvability) 
Fix $\tau>0, h>0$ and $\{\rho_{i}^n \in P_{h,\delta}\}_{i=1}^s$ for some $\delta >0$. Then the function $\mathcal{F}_{h}(\rho,m,\phi)$ admits a unique minimizer in $V_{h,\delta}^n \subset  \mathbb{R}^{s(2N-1)+N+2}$.
\end{thm}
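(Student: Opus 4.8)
The plan is to reproduce the strategy of Theorem~\ref{thm2.3} in this finite-dimensional setting, the key structural observation being that once $\rho$ is fixed both $m$ and $\phi$ are \emph{slaved} to $\rho$ through the affine constraints defining $V_{h,\delta}^n$, so that the genuine optimization variable is $\rho\in[P_{h,\delta}]^s$ alone. First I would verify the slaving of $m$. Given $\rho\in[P_{h,\delta}]^s$, the discrete transport relation (\ref{dt}) with $m_{i,1/2}=m_{i,N+1/2}=0$ fixes $m$ uniquely: solving the recursion $m_{i,j+1/2}=m_{i,j-1/2}-h(\rho_{ij}-\rho_{ij}^n)$ from $m_{i,1/2}=0$ gives $m_{i,j+1/2}=-h\sum_{k=1}^{j}(\rho_{ik}-\rho_{ik}^n)$, and the terminal condition $m_{i,N+1/2}=0$ holds automatically since $h\sum_{j}\rho_{ij}=h\sum_{j}\rho_{ij}^n=1$.

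Next I would show the slaving of $\phi$. The discrete Poisson system $L_h(\phi,\rho)=0$ in (\ref{dp}) determines $\phi$ uniquely: eliminating the ghost values $\phi_0,\phi_{N+1}$ through the boundary relations (\ref{GP}), which is legitimate because $h+2\beta_a\epsilon(a)>0$ and $h+2\beta_b\epsilon(b)>0$, reduces the $(N+2)$-dimensional system to an $N\times N$ system whose matrix is the discrete Dirichlet form $\sum_j h\,\epsilon_{j+1/2}\big((D_h\phi)_{j+1/2}\big)^2$ augmented by strictly positive Robin penalties at the two ends. This matrix is symmetric positive definite, the discrete counterpart of the coercivity in Theorem~\ref{lb}, hence nonsingular. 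Consequently the map $\rho\mapsto(\rho,m(\rho),\phi(\rho))$ is a homeomorphism from the compact convex set $[P_{h,\delta}]^s$ onto $V_{h,\delta}^n$. Since each $\rho_{ij}\ge\delta>0$, the kinetic densities $\hat m_{i,j}^2/\rho_{i,j}$, the entropy densities $\rho_{i,j}\log\rho_{i,j}$, and the $\phi$-quadratic terms are all continuous there, so $\mathcal{F}_h$ is continuous on a compact set and attains its minimum by the extreme value theorem; this gives existence.

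For uniqueness I would establish strict convexity of $\mathcal{F}_h$ along the $\rho$-direction. The set $V_{h,\delta}^n$ is convex and the slaving maps $\rho\mapsto m$ and $\rho\mapsto\phi$ are affine, so linear interpolation of feasible points is feasible. Applying Lemma~\ref{lem2.1}(ii) to the entropy term, Lemma~\ref{lem2.1}(iii) to each kinetic term $\hat m_{i,j}^2/\rho_{i,j}$, and Lemma~\ref{lem2.1}(i) to the quadratic $\phi$-terms shows that for the $\theta$-interpolant of two feasible points $\mathcal{F}_h$ lies below the corresponding convex combination of the endpoint values, with the entropy contribution \emph{strictly} below whenever $\rho^0\ne\rho^1$. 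Thus two distinct minimizers would have a $\theta$-combination of strictly smaller energy, contradicting minimality; this forces $\rho^0=\rho^1$, and the slaving relations then yield $m^0=m^1$ and $\phi^0=\phi^1$. Hence the minimizer is unique.

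I expect the main obstacle to be the unique solvability of the discrete Robin--Poisson operator $L_h$: the careful elimination of the two ghost values and the verification that the reduced matrix is positive definite (the discrete analogue of Theorem~\ref{lb}). The convexity estimates are then immediate from Lemma~\ref{lem2.1}, the essential point being that strict convexity of the entropy alone suffices for uniqueness because $m$ and $\phi$ are completely determined by $\rho$.
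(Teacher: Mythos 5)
Your proposal is correct, and although it leans on the same key lemma (Lemma \ref{lem2.1}) as the paper, it is organized around a genuinely different reduction. The paper works with the joint variable $u=(\rho,m,\phi)$: after showing $V_{h,\delta}^n$ is nonempty and convex, it proves \emph{joint} strict convexity of $\mathcal{F}_h$ by computing the three convexity defects $I_1,I_2,I_3$ and showing that $I_1=I_2=I_3=0$ forces $u^0=u^1$; the delicate part there is that $I_3=0$ by itself admits nonzero solutions $\xi_j=\phi_j^0-\phi_j^1$, so the boundary rows of the discrete Poisson system (\ref{dp}) must be invoked to conclude $\phi^0=\phi^1$, and likewise $I_1=0$ together with $m_{i,1/2}=m_{i,N+1/2}=0$ to conclude $m^0=m^1$. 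You instead exploit the slaving $\rho\mapsto(m(\rho),\phi(\rho))$ throughout: since any two minimizers are feasible, each is determined by its $\rho$-component, so strict positivity of the entropy defect alone ($I_2>0$ whenever $\rho^0\neq\rho^1$) already contradicts minimality, and no constraint analysis for $I_1$ or $I_3$ is needed. Your route also buys something the paper's proof leaves implicit: existence. The paper's two steps (nonempty convex feasible set, strict convexity) yield uniqueness but never address attainment of the infimum; your observation that $V_{h,\delta}^n$ is the affine homeomorphic image of the compact set $[P_{h,\delta}]^s$, on which $\mathcal{F}_h$ is continuous because $\rho_{ij}\geq\delta>0$, closes this via the extreme value theorem. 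Two smaller points in your favor: your formula $m_{i,j+1/2}=-h\sum_{k\leq j}(\rho_{ik}-\rho^n_{ik})$ is the correct integration of (\ref{dt}) given the definition of $d_h$ (the paper's (\ref{DT}) carries a sign and scaling typo, immaterial since only affinity in $\rho$ and the automatic terminal condition $m_{i,N+1/2}=0$ are ever used), and your ghost-elimination argument showing the reduced Poisson matrix is symmetric positive definite is a sharper justification of the unique solvability of $L_h(\phi,\rho)=0$ than the paper's appeal to a ``tridiagonal, diagonally dominated'' matrix, since weak diagonal dominance alone does not imply invertibility; one needs the strictly dominant boundary rows, which your reduction makes explicit.
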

\begin{proof} The proof proceeds in two steps: \\
{\sl Step 1 (Admissible set is non-empty and convex)}  The conservative form of the transport constraint ensures that we always have 
$$
h\sum_{j=1}^N \rho_{ij}=1 \quad i\in [s]. 
$$
For fixed $\delta >0$, take $\rho_{ij} \geq  \delta$, we can uniquely determine $m$ by  
\begin{equation}\label{DT}
 m_{i, j+1/2}=\frac{1}{h}\sum_{l=1}^j(\rho_{il}-\rho_{il}^n),   
\end{equation}
for $j=1, \cdots, N-1$. From the linear system $L(\phi\;, \rho)=0$ we obtain a unique $\phi=(\phi_0, \cdots, \phi_{N+1})$ in terms of $f_j$ and $\rho_{ij} \geq \delta$, since its coefficient matrix is tridiagonal, and diagonally dominated.  Hence the admissible set $V_{h, \delta}^n$ is non-empty. The fact that both the transport constraint and $L(\phi, \rho)=0$ are linear implies that the set $V_{h, \delta}^n$ is convex in $\mathbb{R}^{s(2N-1)+N+2}$.  \\ 
{\sl Step 2 (Objective function is strictly convex under constraints)}\\
With $u=(\rho, m, \phi)$, for any $u^0, u^1 \in V_{h,\delta}$  and $\theta \in (0, 1)$,  $u(\theta)=\theta u^0 +(1-\theta)u^1$ is a convex linear combination of $u^0$ and $u^1$. In addition, as argued in the proof of Theorem \ref{thm2.3},  
we have $u(\theta)\in V_{h,\delta}$. We now show the convexity of $\mathcal{F}_{h}(u)$ by directly calculating 
$$
\mathcal{F}_h(u(\theta)) - \theta \mathcal{F}_h(u^0))- (1-\theta)\mathcal{F}_h(u^1) =-\theta(1-\theta)(I_1 +I_2+I_3), 
$$
where applying Lemma \ref{lem2.1} to each term $I_i$, we have 
\begin{align*}
I_1=&\frac{h}{2\tau} \sum_{i=1}^s\sum_{j=1}^N\frac{(\rho^1_{i,j}\hat m^0_{i,j}-\rho^0_{i,j}\hat m^1_{i,j})^2}{\rho^0_{i,j}\rho^1_{i,j}\rho(\theta)_{i,j}}\geq 0,\quad \text{by}\;  (iii) \; \text{of Lemma} \; \ref{lem2.1}\\
I_2=& h\sum_{i=1}^s\sum_{j=1}^N g_{i,j}(\rho^0_{i,j},\rho^1_{i,j}, \theta)(\rho^0_{i,j}-\rho^1_{i,j})^2\geq 0, \;\; \text{by}\;  (ii) \; \text{of Lemma} \; \ref{lem2.1}\\
I_3=& \frac{1}{8h}\sum_{j=1}^N\epsilon_j[(\phi^0_{j+1}-\phi^0_{j-1})-(\phi^1_{j+1}-\phi^1_{j-1})]^2 \quad \text{by}\;  (i) \; \text{of Lemma} \; \ref{lem2.1}\\
&+ \frac{1}{8\beta_a} [(\phi_0^0+\phi_1^0)-(\phi_0^1+\phi_1^1)]^2
    + \frac{1}{8\beta_b} [(\phi^0_N+\phi^0_{N+1})-(\phi^1_{N}+\phi^1_{N+1})]^2\geq 0.
\end{align*}
Convexity of $\mathcal{F}_h$ follows from $I_1+I_2+I_3\geq 0$. To establish strictly convexity we only need to show $I_1+I_2+I_3= 0$  must lead to $u^0=u^1$.  We argue as follows. 

Clearly the equality holds only when $I_1=I_2=I_3=0$.  From $I_2=0$ it follows $\rho^0=\rho^1$. This when combined with $I_1=0$ implies $\hat m^0_{i,j}=\hat m^1_{i,j}$, which  together with  $m^l_{i,1/2}=m^l_{i,N+1/2}=0$ yields  $m^0=m^1$. Finally we show $\phi^0=\phi^1$ must also hold. Set $\xi_j=\phi_j^0-\phi_j^1$ for $j=0,\cdots, N+1,$ then $I_3=0$ corresponds to the system of linear equations $\xi_0+\xi_1=0,$ $\xi_N+\xi_{N+1}=0$ and 
$\xi_{j+1}-\xi_{j-1}=0$, for $j=1, \cdots, N$. This obviously admits non-zero solutions. 
From the constraint for $\phi$ near the boundary we have 
$$
\phi_0^0+\phi_1^0=2\phi^b(a)+\frac{\beta_a}{h}\epsilon(a) (\phi^0_1-\phi^0_0), \quad \phi_0^1+\phi_1^1=2\phi^b(a)+\frac{\beta_a}{h}\epsilon(a) (\phi^1_1-\phi^1_0),
$$
this implies $\xi_0+\xi_1=\frac{\beta_a}{h}\epsilon(a)(\xi_1-\xi_0)$. Using also $\xi_0+\xi_1=0$, we can conclude $\xi_0=\cdots =\xi_{N+1}=0$, therefore $\phi^0=\phi^1.$
Hence $\mathcal{F}_h(u)$ is strictly convex on $V_{h,\delta}$.
\end{proof} 

The last issue is to find a threshold for $\delta$ so to ensure that solution positivity for the PNP system is propagated at all time steps.
 \begin{thm}\label{thm3.2}(Positivity propagation) 
 There exists $\delta_0>0$ such that the minimizer does not touch the boundary of
 $V_{h, \delta}^n$ for all $0<\delta \leq \delta_0$.  This implies that $\rho^n>0$ for all $n>0$ as long as 
 $\rho^0>0$. 
 \end{thm}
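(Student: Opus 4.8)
The plan is to argue by contradiction: assume the unique minimizer $u^*=(\rho^*,m^*,\phi^*)$ of $\mathcal F_h$ over $V_{h,\delta}^n$ has an \emph{active} component, say $\rho^*_{i_0 j_0}=\delta$, and exhibit a feasible direction along which $\mathcal F_h$ strictly decreases; by the strict convexity established in Theorem \ref{thm3.1} the minimizer is characterized by nonnegative directional derivatives in all feasible directions, so such a descent direction is a contradiction. The engine of the descent is the logarithmic singularity of the entropy: since $\frac{d}{d\rho}(\rho\log\rho)=\log\rho+1\to-\infty$ as $\rho\to0^+$, pushing mass \emph{into} a nearly-empty cell lowers the entropy without bound. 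First I would record a uniform a priori bound: taking the competitor $\rho=\rho^n$ (for which the transport constraint forces $m=0$) shows $\mathcal F_h(u^*)\le C_0$ with $C_0$ independent of $\delta$; since the entropy is bounded below and the field terms are nonnegative, this yields $\sum_{i,j}\hat m_{ij}^{*2}/\rho^*_{ij}\le C_1$, hence the pointwise control $|\hat m^*_{ij}|\le\sqrt{C_1\,\rho^*_{ij}}$, both uniform in $\delta$.

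Next I would compute the one-sided directional derivative of $\mathcal F_h$ along the mass-preserving, feasibility-preserving direction that raises $\rho_{i_0 j_0}$ and lowers $\rho_{i_0 j_1}$ at some cell $j_1$ with $\rho^*_{i_0 j_1}$ strictly larger, with the momentum increment $dm$ dictated by the linear transport constraint as in (\ref{DT}). The entropy part contributes $h\log(\delta/\rho^*_{i_0 j_1})\to-\infty$ as $\delta\to0$. The kinetic part is where Lemma \ref{lem2.1}(iii) enters: at the active endpoint the simultaneous change of $\rho_{i_0 j_0}$ and of $\hat m_{i_0 j_0}$ combines, after completing the square, into an expression bounded \emph{above} by a $\delta$-independent constant, namely $-\delta^{-2}\big(\hat m^*_{i_0 j_0}+\tfrac{h\delta}{2}\big)^2+\tfrac{h^2}{4}\le \tfrac{h^2}{4}$, so the would-be singular self-term is harmless; the field part is $O(1)$ because $\phi$ depends affinely on $\rho$ through the diagonally dominant system $L_h=0$ with $\delta$-independent bounds.

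The step I expect to be the \textbf{main obstacle} is controlling the remaining kinetic cross terms. Because the transport constraint forces $dm$ to be nonzero on every interface between $j_0$ and $j_1$, the directional derivative picks up contributions $\sim \hat m^*_{i_0 p}/\rho^*_{i_0 p}$ at the \emph{intermediate} cells $j_0<p<j_1$; the bound $|\hat m^*|\le\sqrt{C_1\rho^*}$ only controls these by $O(1/\sqrt{\delta})$, which could overwhelm the $O(\log(1/\delta))$ entropy gain if the intermediate densities are themselves small. I would resolve this in two regimes. When the active cell has an immediate neighbour of strictly larger density I take $j_1$ to be that neighbour: then there are no intermediate cells, the self-cancellation handles the active endpoint, and the source endpoint is bounded below, so the directional derivative is $h\log(\delta/\rho^*)+O(1)\to-\infty$. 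For a cluster of active cells I would instead exploit the finer structure left unused so far — the stationarity in $m$ (which is unconstrained) giving the discrete velocity $\hat m^*/\rho^*=\tau D\,\nabla_h v$, together with the exact momentum formula (\ref{DT}) and the fact that $\rho^n$ is bounded below by a fixed positive constant from the previous step — to bound the cross terms uniformly in $\delta$, so that the log-barrier again dominates once $\delta\le\delta_0$.

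Finally, choosing $\delta_0$ small enough that the directional derivative is strictly negative whenever an active cell is present contradicts the variational characterization of the unique minimizer, forcing $\rho^{n+1}>\delta$ for all $\delta\le\delta_0$, i.e.\ $\rho^{n+1}>0$. The statement for all $n$ then follows by induction: given $\rho^n>0$, choose $\delta<\min\{\delta_0,\ \min_{i,j}\rho^n_{ij}\}$ so that $\rho^n\in[P_{h,\delta}]^s$ and the one-step update produces $\rho^{n+1}>0$, with $\rho^0>0$ as the base case.
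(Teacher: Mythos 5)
Your overall architecture coincides with the paper's proof: argue by contradiction from first-order optimality of the minimizer, perturb by moving mass of species $i$ into the active cells, let the logarithmic singularity of the entropy drive the descent, cancel the singular kinetic self-term at the active cell (your completing-the-square step is, in substance, the paper's Young-inequality step with $k\eta=h$), and control the $\phi$-contribution through the linear system (\ref{dp}). Your a priori bound $|\hat m^*_{i,j}|\le\sqrt{C\rho^*_{i,j}}$, obtained from the competitor $(\rho^n,0,\phi^n)\in V^n_{h,\delta}$, is a genuine and attractive departure; the paper only uses the cruder bound $|\hat m^*_{i,j}|\le Nh^{-2}$ coming from (\ref{DT}) and (\ref{rs}). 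You also correctly isolate the crux of the whole argument: the kinetic cross terms at cells lying between the active cell and the donor cell.

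However, your resolution of that crux has two genuine gaps. First, in your ``immediate neighbour'' regime, the assertion that ``the source endpoint is bounded below'' does not follow from the neighbour having strictly larger density: that neighbour could carry density $2\delta$, in which case the entropy gain is $\gamma h\log(\delta/2\delta)=O(\gamma h)$ rather than $\gamma h\log\delta\to-\infty$, while the donor's kinetic derivative, proportional to $\hat m^{*2}_{i,j_1}/\rho^{*2}_{i,j_1}$, is controlled by your a priori bound only as $O(\gamma/\delta)$, which dominates any logarithmic gain. A $\delta$-independent lower bound on the donor density is indispensable; this is precisely why the paper always donates from the maximum cell $j_{k+1}$, whose density exceeds $1/(b-a)$ by mass normalization (\ref{rs}). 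Second, your ``cluster'' regime is only a hope: stationarity in $m$ gives $\hat m^*/\rho^*=\tau D\,\nabla_h v$, but nothing bounds this discrete multiplier gradient uniformly in $\delta$ at near-vacuum cells --- indeed, by the Euler--Lagrange computation in Theorem \ref{thm2.5}, the multiplier $v_i$ contains $\log\rho_i$, which blows up exactly where you need the bound. For what it is worth, the paper does not fully close this step either: it imposes the quantitative lower bound (\ref{rs+}), $\rho^*_{i,j}\ge\delta+r_p(h)$ on the intermediate cells, justified only by the phrase ``by approximation for sufficiently small $h$,'' and its constant $C_1$ (hence $\delta_0$) degenerates as $r_p(h)\to 0$. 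So you have located the weakest point of the published argument, but your proposal as written does not repair it; a complete proof along your lines would require either a $\delta$-uniform lower bound on the intermediate densities or a perturbation whose momentum increment vanishes on the near-vacuum cells.
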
 
 \begin{proof} We use a contradiction
argument: suppose there exists a minimizer 
$u^*$ to the optimization problem (\ref{fKO}) touching the boundary of 
$V_{h,\delta}^n$ at some grid points $j_1 < \cdots < j_k$ with $1 \leq  k \leq N-1$ for $\rho_i$, that is
$$
\rho_{i, j_1}^*=\cdots =\rho_{i, j_k}^*=\delta.
$$
From $h \sum_{j=1}^N \rho_{i, j}^*=1$, we see that $\delta <\frac{1}{b-a}$. Since $\mathcal{F}_h$ is convex and differentiable, we only need to find $u \in \mathcal{A}_{h,\delta} $ such that 
\begin{equation}\label{nn}
\nabla \mathcal{F}_h(u^*)\cdot (u-u^*)<0. 
\end{equation}
Note that both $m$ and $\phi$ can be uniquely determined by $\rho$ from the constraints, it suffices to first choose $\rho$ and then express all components of $u$ in terms of $\rho$. Let $\rho_{i, j_{k+1}}^*$ be the maximum component in  vector $\rho^*_i$, using $h \sum_{j=1}^N \rho_{i, j}^*=1$ we thus have 
\begin{equation}\label{rs}
\frac{1}{b-a} < \rho^*_{i, j_{k+1}} <\frac{1}{h}=\frac{N}{b-a}. 
\end{equation}
Without loss of generality, we assume $j_{k+1}>j_k$, and 
\begin{equation}\label{rs+}
\rho_{i, j}^* \geq \delta +r_p(h), \quad j_p<j<j_{p+1}, \quad p=1, \cdots, k, 
\end{equation}
where $r_p(0)=0$ and $r_p(h)>0$ for $h>0$ small. This can be justified by approximation for sufficiently small $h$.  Fix $h>0$, we take for $0<\gamma< \frac{1}{k}(\frac{1}{b-a}-\delta)$,
$$
\rho_{l,j} =\left\{ \begin{array}{ll}
    \delta +\gamma,  & l=i, \; j=j_1, \cdots, j_k, \\
    \rho^*_{i, j_{k+1}}- \gamma k,  & l=i, j=j_{k+1},\\
    \rho^*_{l,j}, & \text{else}.
\end{array}
\right.
$$
Hence $\tilde u=u-u^*$ can be determined by 
$$
\tilde \rho_{l,j}= \rho_{l, j}-\rho^*_{lj} =\left\{ \begin{array}{ll}
    \gamma, & l=i, \; j=j_1, \cdots, j_k, \\
    -\gamma k,  & l=i, j=j_{k+1},\\
    0, & \text{else}.
\end{array}
\right.
$$
Using $\tilde m=m-m^*$ and formula (\ref{DT}) for both $m$ and $m^*$,   
we have  
\begin{equation}\label{DT+}
\tilde m_{l,j+1/2} =\frac{1}{h}
\sum_{p=1}^j\tilde \rho_{l,p} = \left\{ \begin{array}{ll}
    \frac{1}{h}b_j \gamma, & l=i, \; j_1\leq j\leq  j_{k+1}-1,\\
    0, & \text{else},
\end{array}
\right.
\end{equation}
for $0\leq b_j \leq k$.  Hence 
$$
0\leq \hat{\tilde m}_{i, j} \leq \frac{k\gamma}{h}, \quad j_1\leq j\leq  j_{k+1}.
$$
For $\tilde \phi=\phi-\phi^*$, using (\ref{dp}) for both $\phi$ and $\phi^*$, we obtain $A \tilde \phi=[0, z_ih^2\tilde \rho_i, 0]^\top$, where the coefficient matrix $A$ is non-singular, more precisely, $\tilde \phi$ solves  
\begin{align*}
& (h+2\beta_a \epsilon(a)) \tilde \phi_0 + (h-2\beta_a \epsilon(a)) \tilde \phi_1 =0, \\  
& -\epsilon_{j-1/2} \tilde \phi_{j-1} + 2 \hat \epsilon_j \tilde \phi_j -  \epsilon_{j+1/2} \tilde \phi_{j+1} = h^2 z_i  \tilde \rho_{i, j}  \quad j=1,\cdots, N,\\
&  (h-2\beta_b\epsilon(b))\tilde \phi_N + (h+2\beta_b\epsilon(b)) \tilde \phi_{N+1}=0.  
\end{align*} 
The solution of this linear system can be expressed as  
$$
\tilde \phi_l=\gamma  h^2(c_l- k d_l)z_i, \quad l=0, 1, \cdots, N+1
$$
for some $c_l, d_l$ depending on the coefficients in the above system. The above preparation yields
\begin{align*}
& \nabla \mathcal{F}_h(u^*)\cdot (u-u^*)  = \nabla \mathcal{F}_h(u^*)\cdot \tilde u \\
\qquad & = \sum_{l=1}^s\sum_{j=1}^N \partial_{\rho_{l,j}}\mathcal{F}_h(u^*) \tilde \rho_{l, j} + \sum_{l=1}^s\sum_{j=1}^{N-1} \partial_{m_{l, j+1/2}} \mathcal{F}_h(u^*)\tilde m_{l, j+1/2} + \sum_{j=0}^{N+1} \partial_{\phi_{j}}\mathcal{F}_h(u^*)\tilde \phi_{j}\\
& =\gamma \left[
\sum_{p=1}^k \partial_{\rho_{i,j_p}}\mathcal{F}_h(u^*) 
-k \partial_{\rho_{i, j_{k+1}}} \mathcal{F}_h(u^*) \right]+\sum_{j=j_1}^{j_{k+1}-1} 
\partial_{m_{i, j+1/2}} \mathcal{F}_h(u^*)\tilde m_{i, j+1/2}
+\sum_{j=0}^{N+1} \partial_{\phi_{j}}\mathcal{F}_h(u^*)\tilde \phi_{j}\\
&=: I_1+I_2+I_3.
\end{align*}
In order to estimate $I_1, I_2, I_3$ we also need to bound $u^*$ in terms of $\rho^*$.
From (\ref{DT}) and (\ref{rs}) we have 
$$
|m^*_{i, j+1/2}|\leq \frac{1}{h}\sum_{l=1}^j \rho^*_{il}\leq \frac{Nj}{(b-a)h} \Rightarrow |\hat m^*_{i, j}|\leq \frac{N^2}{(b-a)h}=Nh^{-2}.
$$
For $\phi^*$ satisfying $A\phi^*=h[2\phi^b(a), h(f+\sum_{i=1}^s z_i \rho^*_{i}), 2\phi^b(b)]^\top$, we have 
$$
|\phi^*|\leq h|A^{-1}|[2\phi^b(a), h(f+\sum_{i=1}^s z_i \rho^*_{i}), 2\phi^b(b)]^\top|=:C^*_\phi.
$$
We proceed as follows: from the definition of the objective function 
\begin{align*}
 \mathcal{F}_h(u) 
 = & \frac{h}{2\tau} \sum_{j=1}^N \sum_{i=1}^{s} \frac{   \hat {m}^2_{i,j}      }{\rho_{i,j}} D^{-1}_{i,j } +h\sum_{j=1}^{N} \left( \sum_{i=1}^s \rho_{i,j}\log \rho_{i,j} +\frac{\epsilon_j}{8h^2} (\phi_{j+1}-\phi_{j-1})^2\right)\\
& +\frac{1}{8\beta_a}(\phi_0+\phi_1)^2+\frac{1}{8\beta_b}(\phi_N+\phi_{N+1})^2,
\end{align*}
given in (\ref{DOB+}) we have 
\begin{align*}
I_1& = \gamma 
\sum_{p=1}^k \left[ -\frac{h}{2\tau} \cdot \frac{(\hat m^*_{i, j_p})^2}{(\rho_{i,j_p}^*)^2}D_{i,j_p}^{-1}+h(1+\log \rho_{i,j_p}^*)\right] 
-\gamma k \left[ -\frac{h}{2\tau} \cdot \frac{(\hat m^*_{i, j_{k+1}})^2}{(\rho_{i,j_{k+1}}^*)^2}D_{i,j_{k+1}}^{-1}+h(1+\log \rho_{i,j_{k+1}}^*)\right]\\
&=-\frac{\gamma h}{2\tau \delta^2}\sum_{p=1}^k (\hat m^*_{i, j_p})^2 D_{i,j_p}^{-1}
+\gamma h k {\rm log} \delta 
+ \frac{\gamma k h}{2\tau } \frac{(\hat m^*_{i, j_{k+1}})^2}{(\rho^*_{i, j_{k+1}})^2} D_{i,j_{k+1}}^{-1}
-\gamma h k {\rm log} \rho^*_{i, j_{k+1}} \\
& \leq -\frac{\gamma h}{2\tau \delta^2}\sum_{p=1}^k (\hat m^*_{i, j_p})^2 D_{i,j_p}^{-1} + 
\gamma h k {\rm log} \delta +\frac{\gamma kN^4}{2\tau h} D_{i,j_{k+1}}^{-1} +\gamma hk {\rm log} (b-a)\\
&=  -\frac{\gamma h}{2\tau \delta^2}\sum_{p=1}^k (\hat m^*_{i, j_p})^2 D_{i,j_p}^{-1}
 +\frac{\gamma kN^4}{2\tau h} D_{i,j_{k+1}}^{-1} +\gamma hk {\rm log} \delta (b-a).
\end{align*}
Next, we estimate $I_2$:
\begin{align*}
I_2 &=\frac{h}{2\tau}\sum_{j=j_1}^{j_{k+1}-1} \left(  \frac{\hat{m}^*_{i,j}}{\rho^*_{i,j}} D_{i,j}^{-1}+\frac{\hat{m}^*_{i,j+1}}{\rho^*_{i,j+1}} D_{i,j+1}^{-1} \right)\tilde m_{i,j+1/2}\\
& =\frac{h}{\tau} 
\sum_{j=j_1}^{j_{k+1}-1}   \frac{\hat{m}^*_{i,j}}{\rho^*_{i,j}} D_{i,j}^{-1} 
\hat{\tilde m}_{i,j}
+ \frac{h}{2\tau} 
\left(-\frac{\hat{m}^*_{i,j_1}}{\rho^*_{i,j_1}} D_{i,j_1}^{-1}
\tilde m_{i,j_1-1/2} +\frac{\hat{m}^*_{i,j_{k+1}}}{\rho^*_{i,j_{k+1}}} D_{i,j_{k+1}}^{-1}
\tilde m_{i,j_{k+1}-1/2} 
\right)\\
& = \frac{h}{\tau} 
 \sum_{p=1}^k \frac{\hat{m}^*_{i,j_p}}{\delta} D_{i,j_p}^{-1} 
\hat{\tilde m}_{i,j_p} 
 +\frac{h}{\tau} 
\sum_{p=1}^k\sum_{j=j_p+1}^{j_{p+1}-1}  \frac{\hat{m}^*_{i,j}}{\rho^*_{i, j}} D_{i,j}^{-1} 
\hat{\tilde m}_{i,j} +\frac{h}{2\tau}\cdot \frac{ \hat{m}^*_{i,j_{k+1}}}{ \rho^*_{i,j_{k+1}}} D_{i,j_{k+1}}^{-1} 
\tilde m_{i,j_{k+1}-1/2}  \\ 
& \leq \frac{h}{\tau}\cdot \frac{k\gamma}{h}  \left(
\sum_{p=1}^k\frac{|\hat{m}^*_{i,j_p}|}{\delta}D_{i,j_{p}}^{-1}  +
 \sum_{p=1}^k\sum_{j=j_p+1}^{j_{p+1}-1} D_{i,j}^{-1}\frac{Nh^{-2}}{\delta +r_p (h)}+ \frac{N^2}{h} D_{i,j_{k+1}}^{-1}  \right)  \\
& \leq \frac{k\gamma}{\tau}\left(  \frac{ \eta}{2 \delta^2}\sum_{p=1}^k (\hat{m}^*_{i,j_p})^2 D_{i,j_{p}}^{-1}  +  \frac{1}{2 \eta}\sum_{p=1}^k D_{i,j_{p}}^{-1}
+\frac{N }{ h^2}  \sum_{p=1}^k\sum_{j=j_p+1}^{j_{p+1}-1} \frac{D_{i,j}^{-1}}{r_p(h)} + \frac{N^2}{h} D_{i,j_{k+1}}^{-1} \right), \quad \forall \eta>0. 
\end{align*}
 Take $\eta$ so that $k\eta =h$, we have 
$$
I_2 \leq \frac{\gamma h}{2\tau \delta^2}\sum_{p=1}^k (\hat m^*_{i, j_p})^2D_{i,j_{p}}^{-1} +C_1, \quad C_1:=\frac{k^2\gamma }{2 \tau h}\sum_{p=1}^k D_{i,j_{p}}^{-1}
+\frac{N k\gamma  }{ \tau h^2}  \sum_{p=1}^k\sum_{j=j_p+1}^{j_{p+1}-1} \frac{D_{i,j}^{-1}}{r_p(h)} +  \frac{N^2k \gamma }{\tau h} D_{i,j_{k+1}}^{-1} .
$$
Note that 
$$
\partial_{\phi_j} \mathcal{F}_h(u^*)=\frac{1}{4h} \left[\epsilon_{j-1}(\phi^*_j-
\phi^*_{j-2})+\epsilon_{j+1}(\phi^*_j-
\phi^*_{j+2}) \right] \quad j=2, \cdots, N-1,
$$
this together with derivatives involving boundary terms allows us to estimate $I_3$:  
\begin{align*}
I_3 & \leq |\partial_{\phi} \mathcal{F}_h(u^*)|\cdot |\tilde \phi|\\
& \leq \frac{1}{h} (|\epsilon|+\beta_a^{-1}+\beta_b^{-1})C_\phi^* \cdot \gamma |z|(|c|+k|d|)h^2 =C_0C^*_\phi h.
\end{align*}
For $\delta < \frac{1}{2(b-a)}$, we can take $\gamma k=\frac{1}{2(b-a)}$ such that $k \gamma < 1/(b-a)-\delta$ still holds. Hence   
\begin{align*} 
I_1+I_2 +I_3& \leq  \gamma hk {\rm log} \delta (b-a) +\frac{\gamma kN^4}{2\tau h} D_{i,j_{k+1}}^{-1}+C_1  +C_0C^*_\phi h \\
& =\frac{1}{2N} {\rm log} \delta (b-a) +\frac{N^3}{4\tau h^2}D_{i,j_{k+1}}^{-1} +C_1+C_0C^*_\phi h <0 
\end{align*} 
provided $\delta < \delta_0$ with  
$$
\delta_0: = \frac{1}{b-a} \min\left\{\exp \left(-\frac{N^4}{2\tau h^2}D_{i,j_{k+1}}^{-1} -2NC_1-2C_0C_\phi^*Nh \right), \frac{1}{2}\right\}.  
$$
This gives (\ref{nn}) as we intended to show. Such contradiction allows us to conclude that a minimizer at nth step can only occur in the interior of
$
V_{h, \delta_0}^n
$
for some $\delta_0>0$.  In order to show such solution positivity can propagate, we start from $\rho^0>0$. Based on the above conclusion we recursively have  
$$
\rho^{n+1} \in V_{h, \delta}^n \subset V_{h, \delta_0}^n.
$$
This completes the proof. 
\end{proof}

\section{Optimization algorithms} In this section, we discuss numerical techniques for solving the constrained optimization problem  (\ref{fKO}).  
Let $u = (\rho, m, \phi)$, (\ref{fKO}) can be written as
\begin{equation}\label{finalp}
 \min_u \mathcal{F}_h(u), s.t. \quad Au = b, \quad S u \geq \delta,   
\end{equation}
Where $\mathcal{F}_h(u)$ is defined in (\ref{DOB+}), $Au=b$ is the linear system corresponding to the constraints (\ref{dt}) and (\ref{dp}), and $S$ is the selection matrix that only selects $\rho$ component in $u$. 

A simple method to solve (\ref{finalp}) is to do the following update:
$$
\tilde u^{n+1}=u^n-\eta G \nabla_u  \mathcal{F}_h(u^n), 
$$
with the matrix 
$$
G=I-A^\top (AA^\top)^{-1}A, 
$$
so that $ A \tilde u^{n+1}=b$ if $Au^n=b$. 
 One then applies the projection 
$$
u^{n+1}=\Pi(\tilde u^{n+1}) 
$$
so that $\rho_{ij}^{n+1}\geq \delta$. \\
\begin{algorithm}[H]
\SetAlgoLined
\KwIn{$A, b$, $u^n$, $Iter_{max}$,  and $\epsilon$.}
\KwOut{$u^{n+1}$ }
 initialization\;
 $G=I-A^T(AA^T)^{-1}A$, \quad $u^{(0)}=u^n$. 
 
 \For{$k=1:Iter_{max}$}{
 \begin{itemize}
  \item Compute the update direction by
  $$
 \Delta u=-G\nabla_u\mathcal{F}_h(u^{(k-1)})
  $$
  \item Use backtracking to determine step size $\eta$\;
  \item Update to get 
$$
\tilde{u}=u^{(k-1)}+\eta \Delta u
$$
\item Projection $u^{(k)}=\Pi (\tilde u)$\;
  \end{itemize}
  \If{$||Au^{(k)}-b||+||\eta \Delta u||\leq \epsilon$}{
   Stop the iteration\;
 }}
 $u^{n+1}=u^{(k)}$. 
 \caption{ PG Algorithm}
\end{algorithm}

The positivity propagation property stated in Theorem \ref{thm3.2} ensures that $Su\geq \delta $ will be fulfilled by the scheme as long as $\rho^0 \geq \delta$ for $\delta$ suitably small.  
Hence in our numerical tests the projection step is not enforced, where we select 
$$
\delta=\max\{ \min\{h^2, \tau\}, \min \{\rho^{in}_{i}(x_j) \} \}>0.
$$

In summary, the numerical solutions $\rho^{n}_{i,j}$ and $\phi^{n}_{j}$ are updated with the algorithm: 

\begin{algorithm}[H]
\SetAlgoLined
\KwIn{$\rho^{in}_i(x)$, final time $T$, and discretization parameters $h$, $\tau$, $\delta>0$} 
\KwOut{$\rho^{n}_{i,j}$, $\phi^n_{j}$ for $n=1,\cdots, T/\tau$. }
 initialization:  $u^0=(\rho^0, m^0, \phi^0)$ with \\
 $\rho^0_{ij}=\max\{\rho^{in}_i(x_j), \delta\}$. \\ 
 $m^0_{i,j}=0$, and $\phi^0_{j}$ is obtained by solving (\ref{GP}) with $\rho^0_{i,j}$. \\
 \For{$n=1:T/\tau$}{
 $ \rho^{n+1}= \arg \min _{u \in V_{h, \delta}^n} \bigg\{  \mathcal{F}_h(u)  \bigg\}$ with Algorithm 1.
}
 \caption{ Algorithm for the fully discrete scheme}
\end{algorithm}
\begin{rem}
One may also apply the Primal-Dual Interior-Point algorithm (PDIP) \cite[Chapter 19]{NW06}) to solve the minimization problem in Algorithm 2, which helps to enforce the positive lower bound for densities, but it is much more expensive than the PG method, see a comparison in Table 3.  
\end{rem}


\section{Numerical tests}
In this section, we present  a selected set of numerical tests to demonstrate the convergence and properties of the proposed scheme. In all tests, the tolerance for PG method is set as $10^{-6}.$

Errors are measured in the following discrete $l_{2}$ norm:  
$$
err=\left (\sum_{1\leq j \leq N} h| u_j^{n}-U_{j}^{n}|^2\right )^{1/2}.
$$
Here $u_{j}^n$ and $U_j^n$ denotes the numerical solutions and  reference solutions at $(x_j,t_n)$.  In what follows we take $u^n_j=\rho_{i,j}^n$,or $\phi_{j}^n$ at time $t=n\tau.$
\subsection{1D multiple species }
We apply our scheme to solve the 1D two-species PNP system (\ref{PNP1}) and verify the proven properties. 
\begin{example}\label{Ex31} (Accuracy test)
We consider the following PNP system 
\begin{equation}\label{Exacc}
\begin{aligned}
  \partial_t \rho_1  =& \partial_x \left( \partial_x \rho_1+ \rho_1 \partial_x \phi \right) ,  \\
  \partial_t \rho_2  =& \partial_x \left( \partial_x \rho_2- \rho_2 \partial_x \phi \right) ,  \\
  -\partial_x^2 \phi  =&\rho_1-\rho_2, 
\end{aligned}
\end{equation}
in $[-1,\ 1]$ and $t>0$.  This is (\ref{PNP1}) with $D_1=D_2=\epsilon=1$, $q_1=1$, $q_2=-1$, and $f(x)=0.$ The initial and boundary conditions are chosen as
\begin{equation}\label{Exacc_BC}
\begin{aligned}
 & \rho_1^{in}(x)=2-x^2, \quad \rho_2^{in}(x)=2+sin(\pi x), \\
& \phi(0,t)=-1, \ \phi(1,t)=1.
\end{aligned}
\end{equation}
In the accuracy test, we consider the numerical solutions obtained by $h=1/320$ and $\tau=1/10000$ as the reference solution. We use the time step $\tau=h$ and $\tau=h^2 $ to compute numerical solutions. The errors and orders at $t=0.5$ are listed in Table 1 and Table 2.
 
  \begin{table}[ht]\label{ex311}
        \centering
                \caption{\tiny{Accuracy for Example \ref{Ex31} with $\tau=h$ }}
        \begin{tabular}{|c| c |c |c| c |c|c| c |}
            \hline
              h& \ $\rho_1$ error & order& $\rho_2$ error &order &$\phi$ error &order \\ [0.5ex] 
                          \hline
          1/10 &   2.67958E-02&-&  9.80117E-03&- &1.17890E-03&-\\
            1/20 &  1.27689E-02&  1.06937 &   4.12484E-03&  1.24862  &5.46161E-04&1.11004  \\
            1/40&   6.20098E-03&  1.04207 &   1.91422E-03& 1.10758   &3.18396E-04& 0.77850 \\
            1/80 &   3.04165E-03&  1.02764&  9.21957E-04&  1.05399  &1.74525E-04& 0.86739  
             \\ [1ex]
            \hline
        \end{tabular}
     \end{table}

  \begin{table}[ht]\label{ex312}
        \centering
                \caption{\tiny{Accuracy for Example \ref{Ex31} with $\tau=h^2$ }}
        \begin{tabular}{|c| c |c |c| c |c|c| c |}
            \hline
              h& \ $\rho_1$ error & order& $\rho_2$ error &order &$\phi$ error &order \\ [0.5ex] 
                          \hline
          1/10 &   9.00817E-03&-&  3.13854E-03&- &1.42932E-03&-\\
            1/20 &  2.22285E-03&  2.01882 &   7.47122E-04&  2.07068 &3.62387E-04&1.97973  \\
            1/40&   5.36781E-04&  2.05001 &   1.78121E-04& 2.06849   & 9.15204E-05& 1.98537 \\
            1/80 &   1.15944E-04&  2.21091&   3.79348E-05&  2.23126  &2.35563E-05& 1.95798  
             \\ [1ex]
            \hline
        \end{tabular}
     \end{table}
     We see from Table 1 and Table 2 that the scheme is first order in time and second order in space.
\end{example}
\begin{example} In this test, still with the initial boundary value problem (\ref{Exacc})-(\ref{Exacc_BC}), we show the proven solution properties. We take $h=0.05, \tau=0.01$ to compute the numerical solutions up to $T=2$. Solutions at $T=0,\ 0.05,\ 0.25,\ 1.5,\ 2$ are given in Figure 1. In Figure 2 are total mass of $\rho_1$ and $\rho_2$, and free energy profile. We see from Figure 1 and Figure 2 that the scheme is positivity preserving, mass conservative, and energy dissipating.

\begin{figure}[h]
\caption{Solution evolutions for $\rho_1, \rho_2$, and $\phi$. }
\centering  
\subfigure{\includegraphics[width=0.48\linewidth]{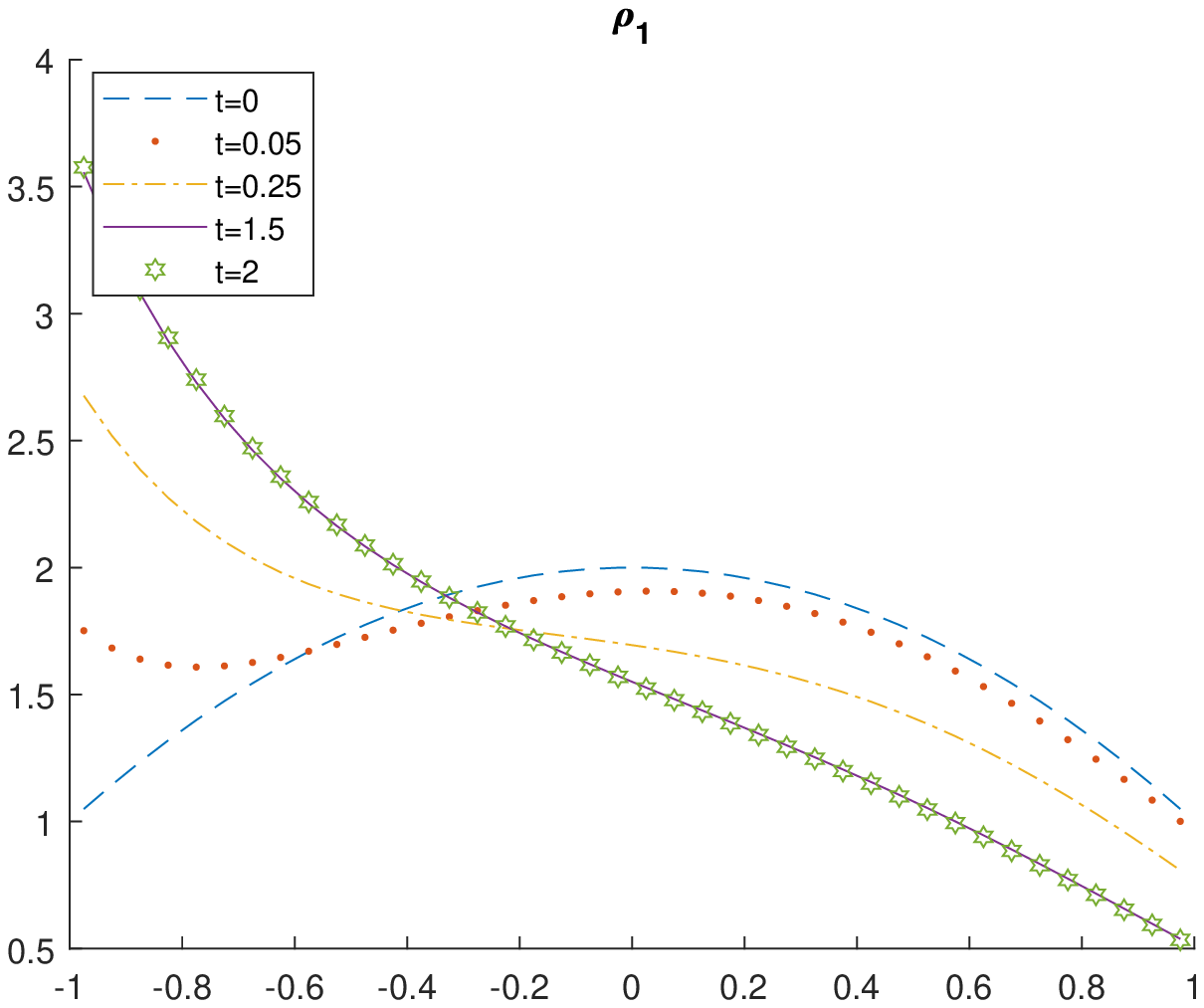}}
\subfigure{\includegraphics[width=0.48\linewidth]{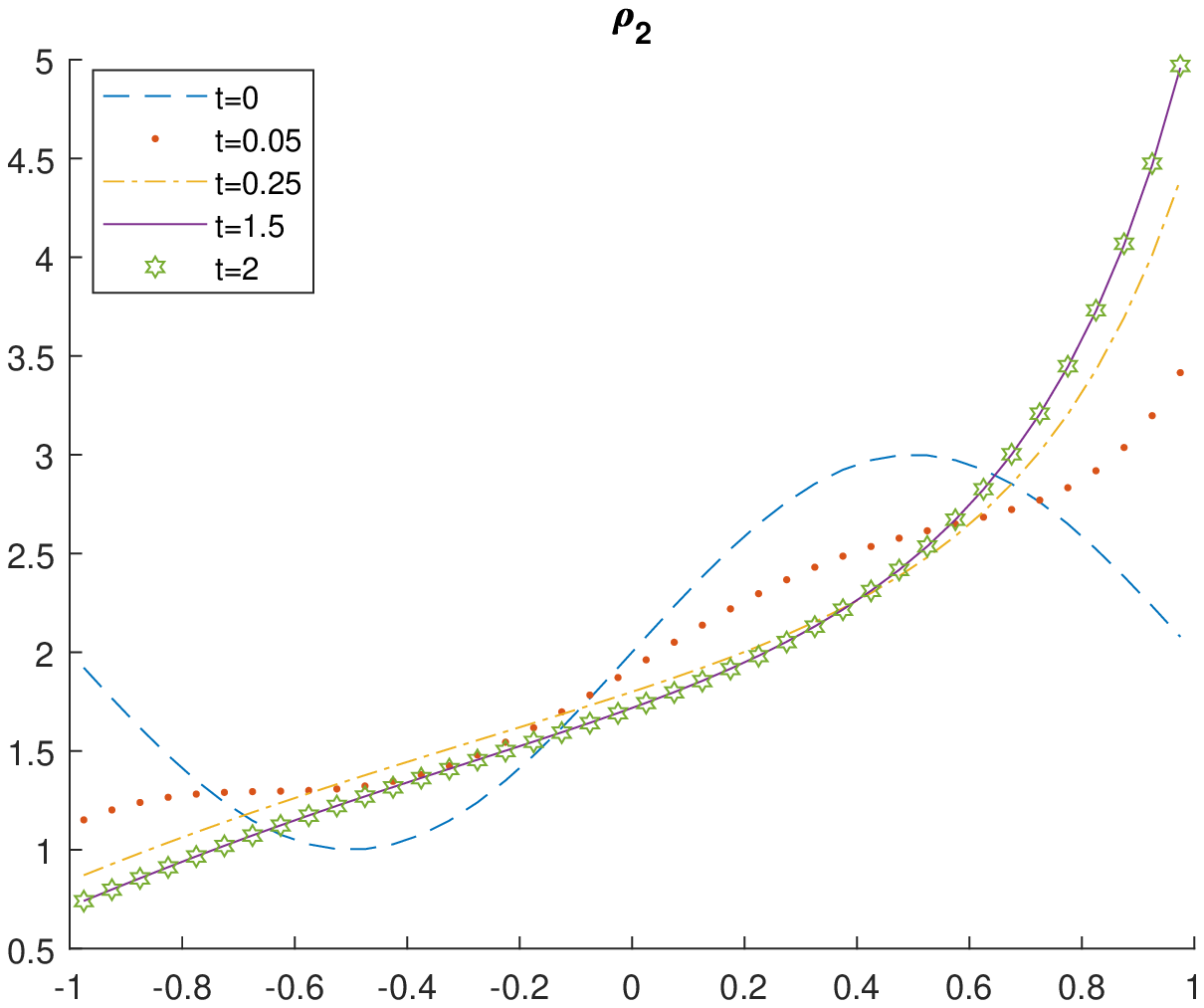}}
\subfigure{\includegraphics[width=0.48\linewidth]{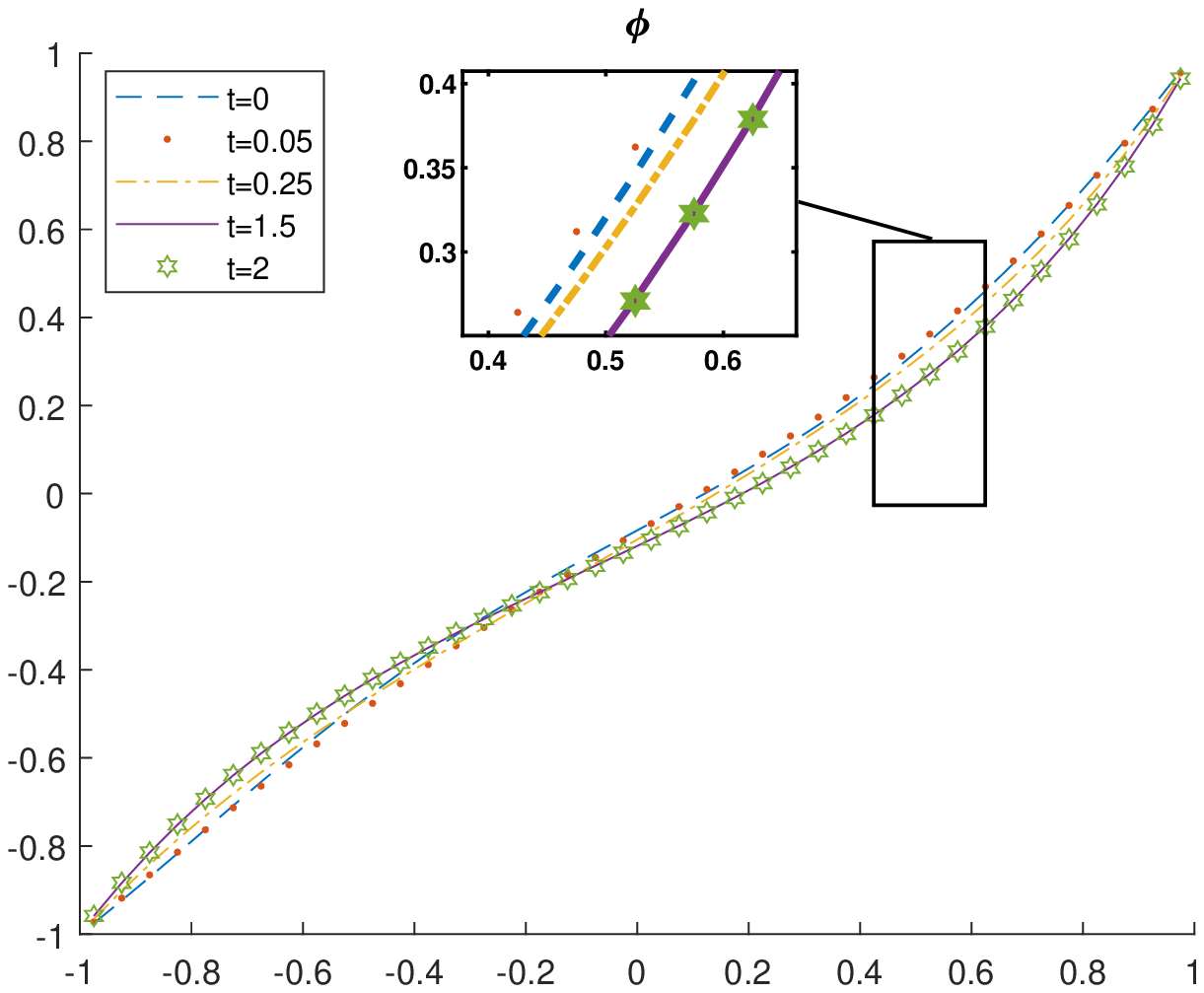}}
\end{figure}
 \begin{figure}[!tbp]
\caption{Energy dissipation and mass conservation }
\centering  
\subfigure{\includegraphics[width=0.48\linewidth]{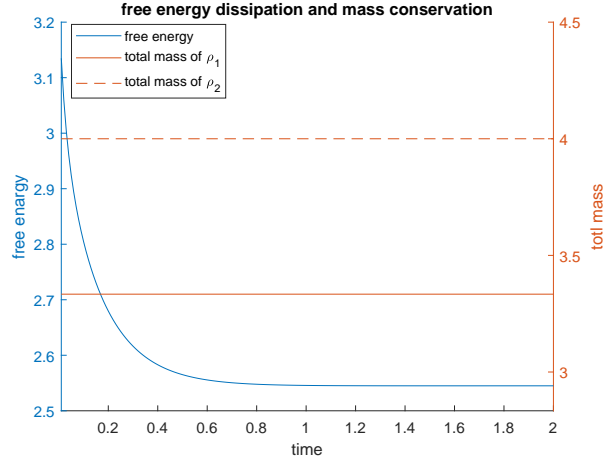}}
\end{figure}
\end{example}

\begin{example} (Positivity propagation)
In this test, we consider the PNP system (\ref{Exacc}) with following  initial and boundary conditions
\begin{equation}\label{EX3BC}
\begin{aligned}
 & \rho_1^{in}(x)=\frac{10}{3}\chi_{_{[-0.5, 0.5]}} , \quad \rho_2^{in}(x)=2+sin(\pi x), \\
& \phi(0,t)=-1, \ \phi(1,t)=1.
\end{aligned}
\end{equation}
We take $h=0.05, \tau=0.01$ to compute the numerical solutions up to $T=2$. Solutions at $T=0,\ 0.015,\ 0.1,\ 1,\ 2$ are displayed in Figure 3. In Figure 4 are total mass of $\rho_1$,  $\rho_2$, and free energy profile. From these results we see that the scheme is positivity preserving, mass conservative, and energy dissipating. We also observe that steady state solutions are identical to those in Example 5.2; this suggests that steady state solutions of the PNP systems with Dirichlet boundary condition only depends on the total mass and the Dirichlet boundary condition, but not sensitive to the profile of the initial data.

In Table 3 we compare CPU times (in seconds) for the PDIP method and the PG method. Here we set $T=0.5$ and choose different number of sub-intervals.
 \begin{table}[ht]\label{comparison}
        \centering
                \caption{\tiny{CPU times comparison for PDIP method and PG method  }}
        \begin{tabular}{|c| c |c |c| c |c|c| c |c|}
            \hline
              h&  1/10  &1/50  & 1/100  &1/150 & 1/200& 1/20 & 1/300 \\ [0.5ex] 
                          \hline
          PDIP &   1.22&2.37& 8.38&17.73&31.32&48.97& 74.71\\
            PG &  0.19& 0.39& 1.15 &   2.02& 3.12  &4.56&6.38  \\
            [1ex]
            \hline
        \end{tabular}
     \end{table}

\begin{figure}[h]
\caption{Solution evolutions for $\rho_1, \rho_2$. }
\centering  
\subfigure{\includegraphics[width=0.48\linewidth]{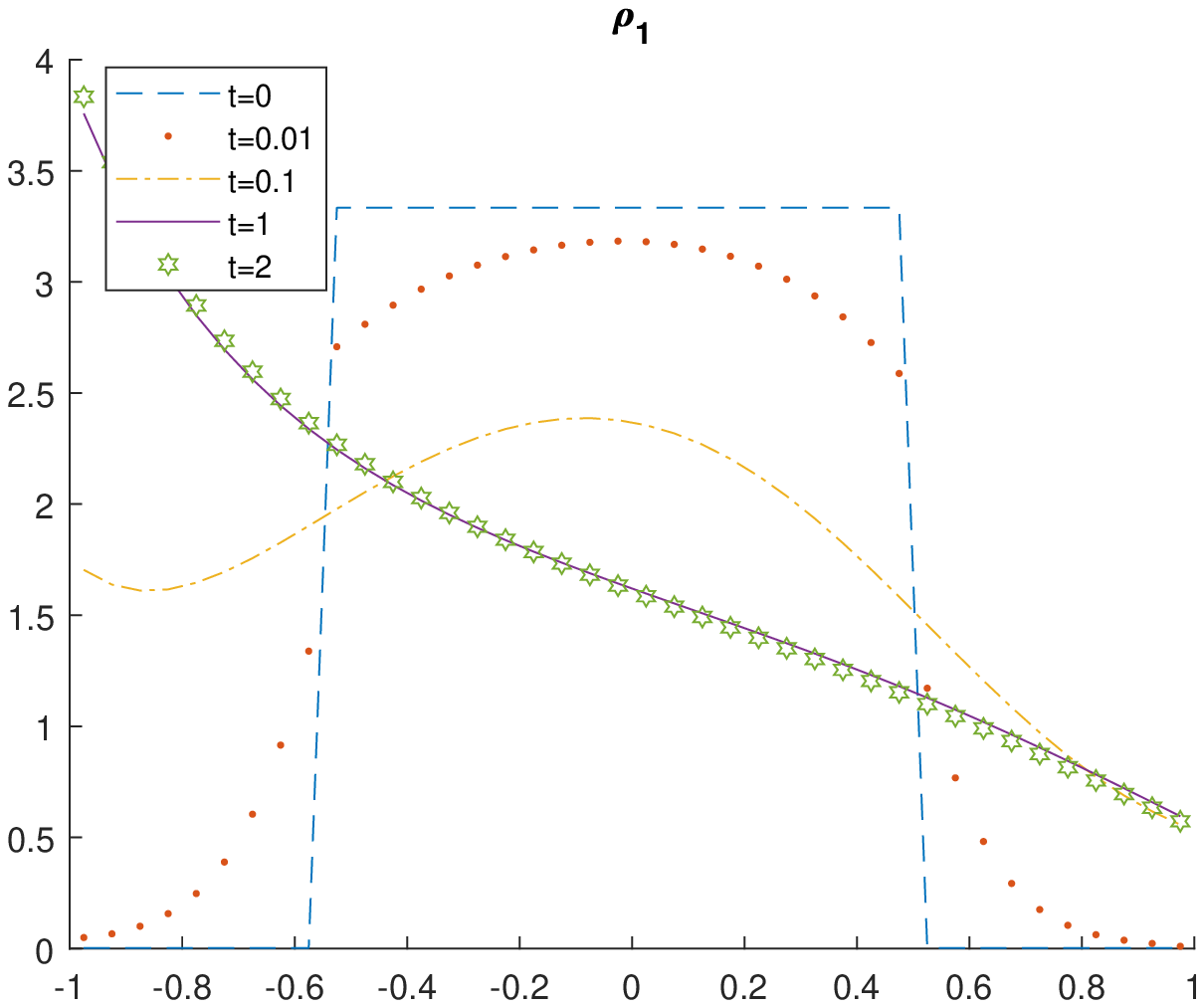}}
\subfigure{\includegraphics[width=0.48\linewidth]{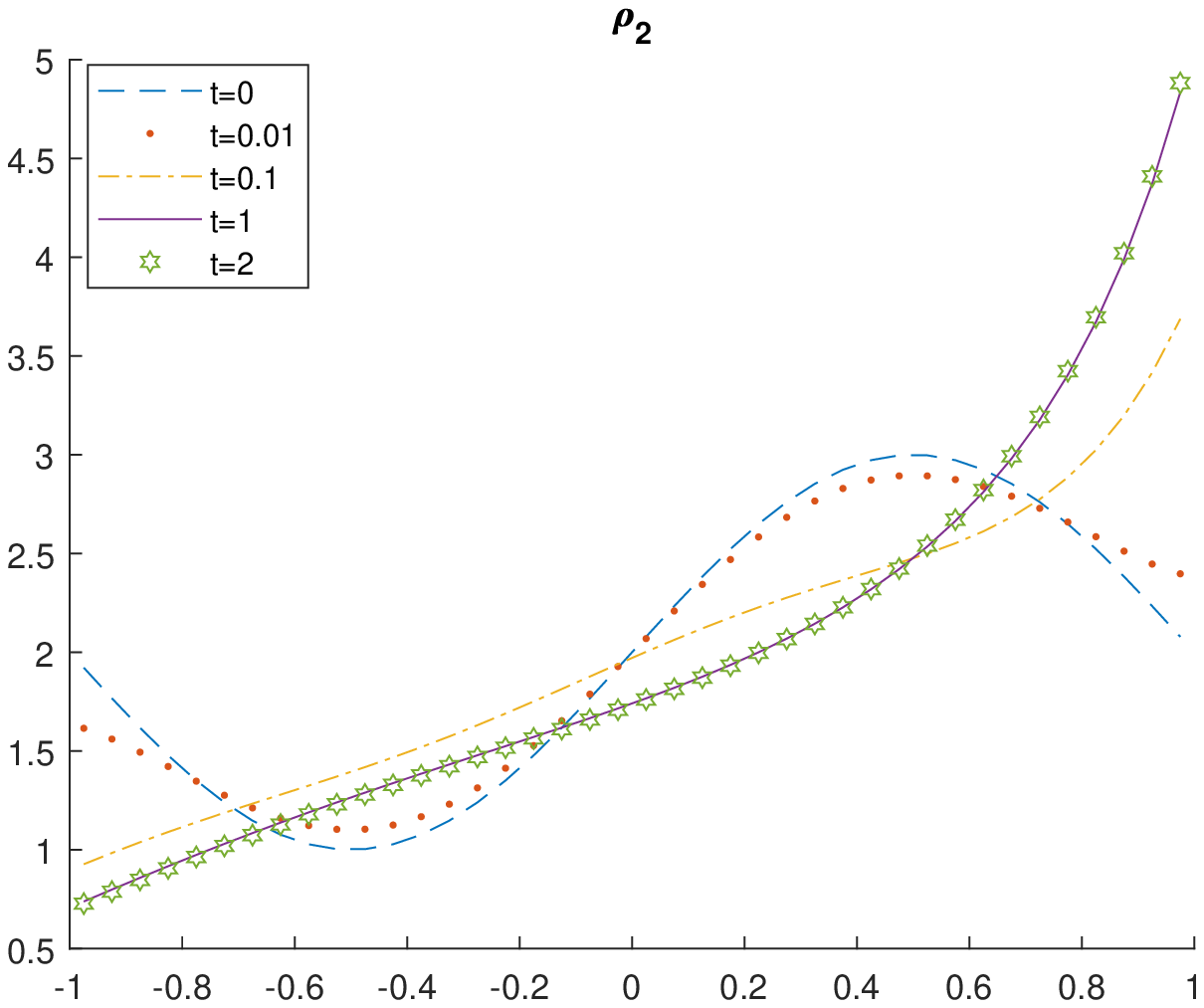}}
\end{figure}
 \begin{figure}[!tbp]
\caption{Energy dissipation and mass conservation }
\centering  
\subfigure{\includegraphics[width=0.48\linewidth]{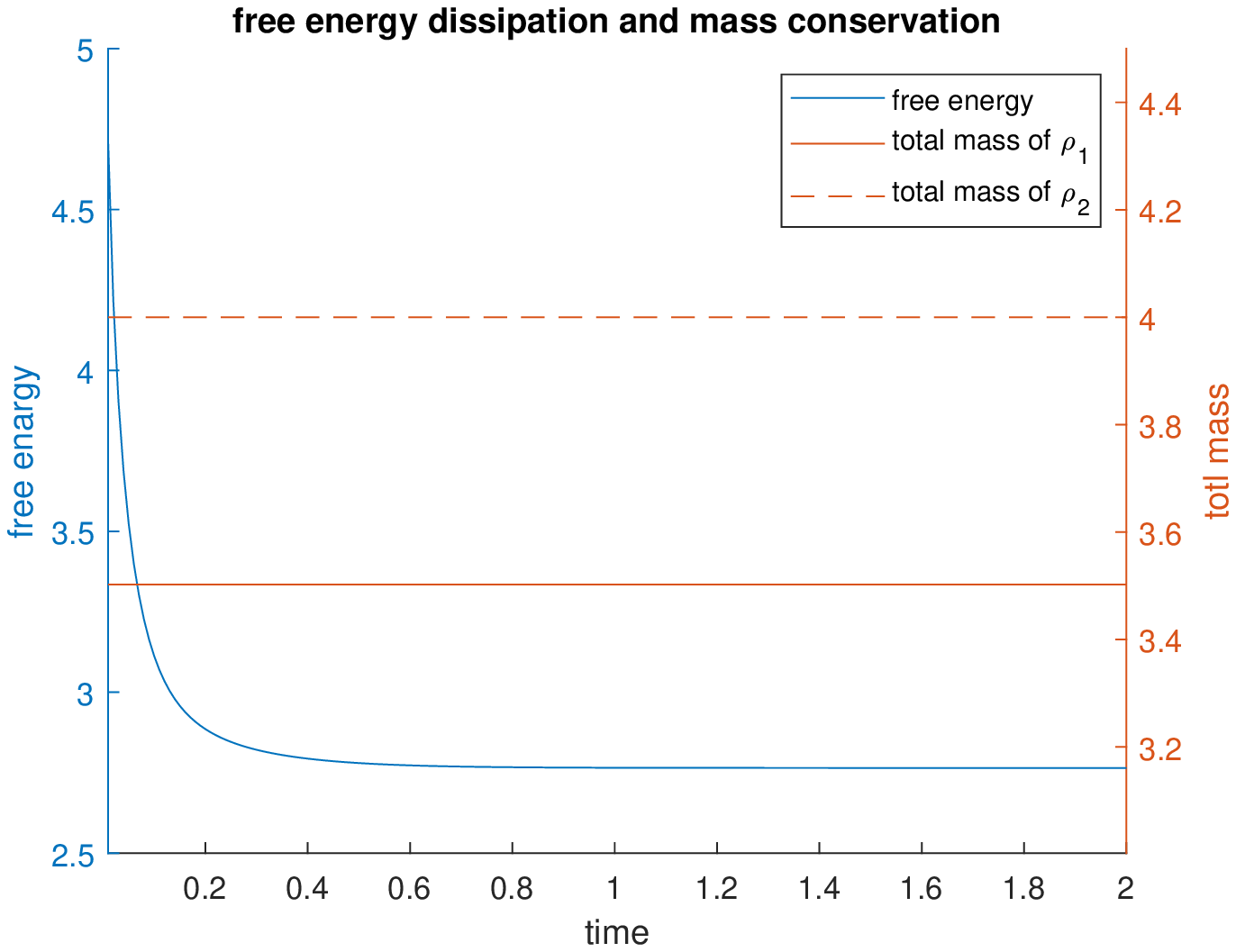}}
\end{figure}
\end{example}

\begin{example} 2D single species (Neumann boundary condition). We now apply our scheme to solve the 2D single-species PNP system
\begin{align*}
  \partial_t \rho  =& \nabla \cdot \left( \nabla \rho+ \rho \nabla \phi \right) ,  \\
  -\Delta  \phi  =& \rho+f(x,y),
\end{align*}
on domain $\Omega=[0,1]\times [0,1]$. We consider the initial boundary conditions 
$$
\rho^{in}(x,y)=-4(x^2-x)-8(y^2-y),\quad \frac{\partial \phi}{\partial n}|_{\partial \Omega}=-1.
$$
The permanent charge $f(x,y)$ is
\begin{equation}\label{BC1}
   f(x,y)= \begin{dcases}
        32, &  \frac{5}{8}\leq x \leq \frac{7}{8} ,\quad \frac{5}{8}\leq y\leq \frac{7}{8}, \\
       0, &   else.  \\
    \end{dcases}
\end{equation}

This problem satisfies the compatibility condition (\ref{CC}). We take $h_x=h_y=0.025, \tau=0.01$ to compute the numerical solutions up to $T=6$. Color plot of the solutions at $T=0.01,0.5,1,2,4,6$ are given in Figure 5. In Figure 6 are total mass of $\rho$ and free energy profile

 \begin{figure}[h]
\caption{Solution evolutions for $\rho$. }
\centering  
\subfigure{\includegraphics[width=0.32\linewidth]{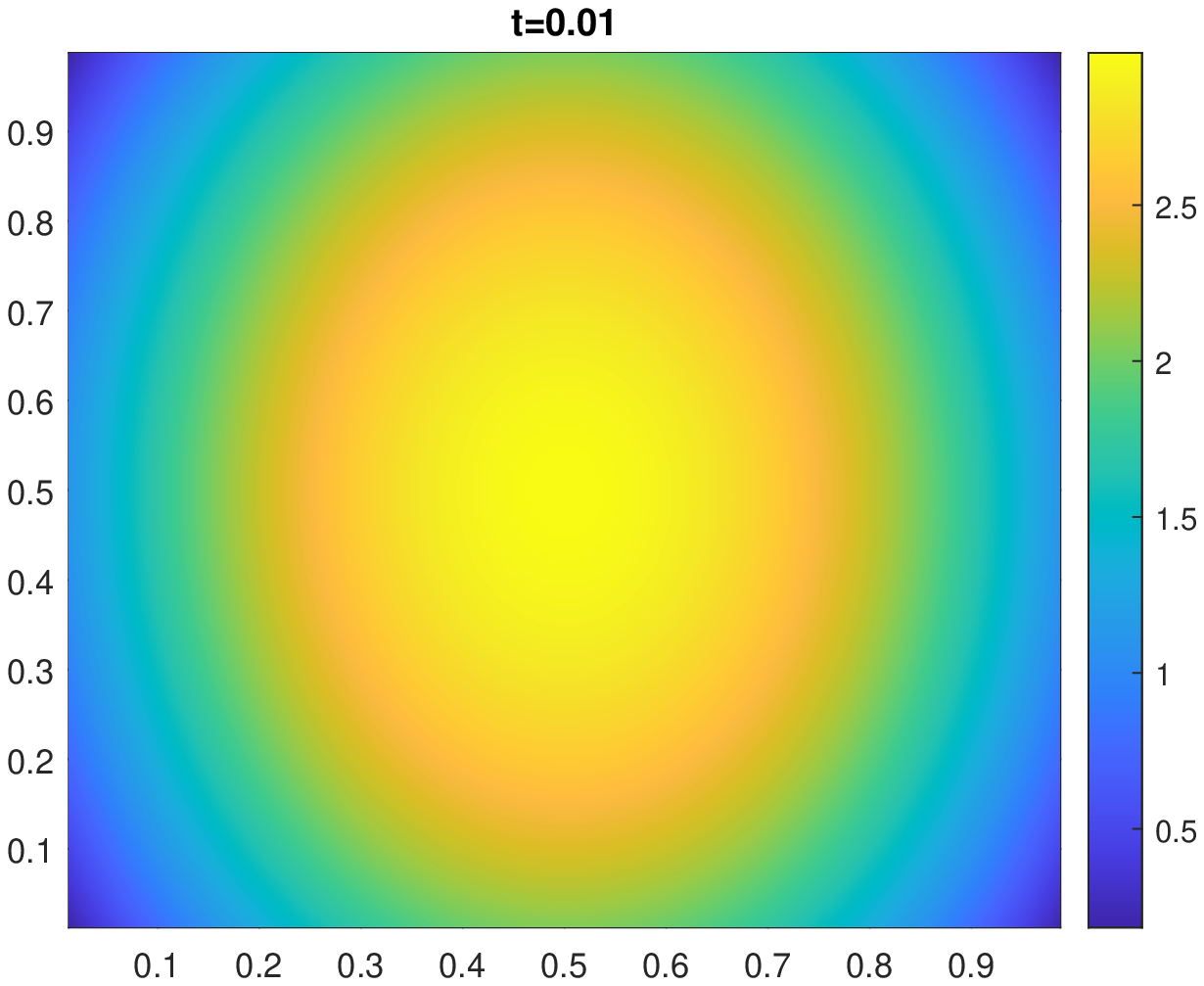}}
\subfigure{\includegraphics[width=0.32\linewidth]{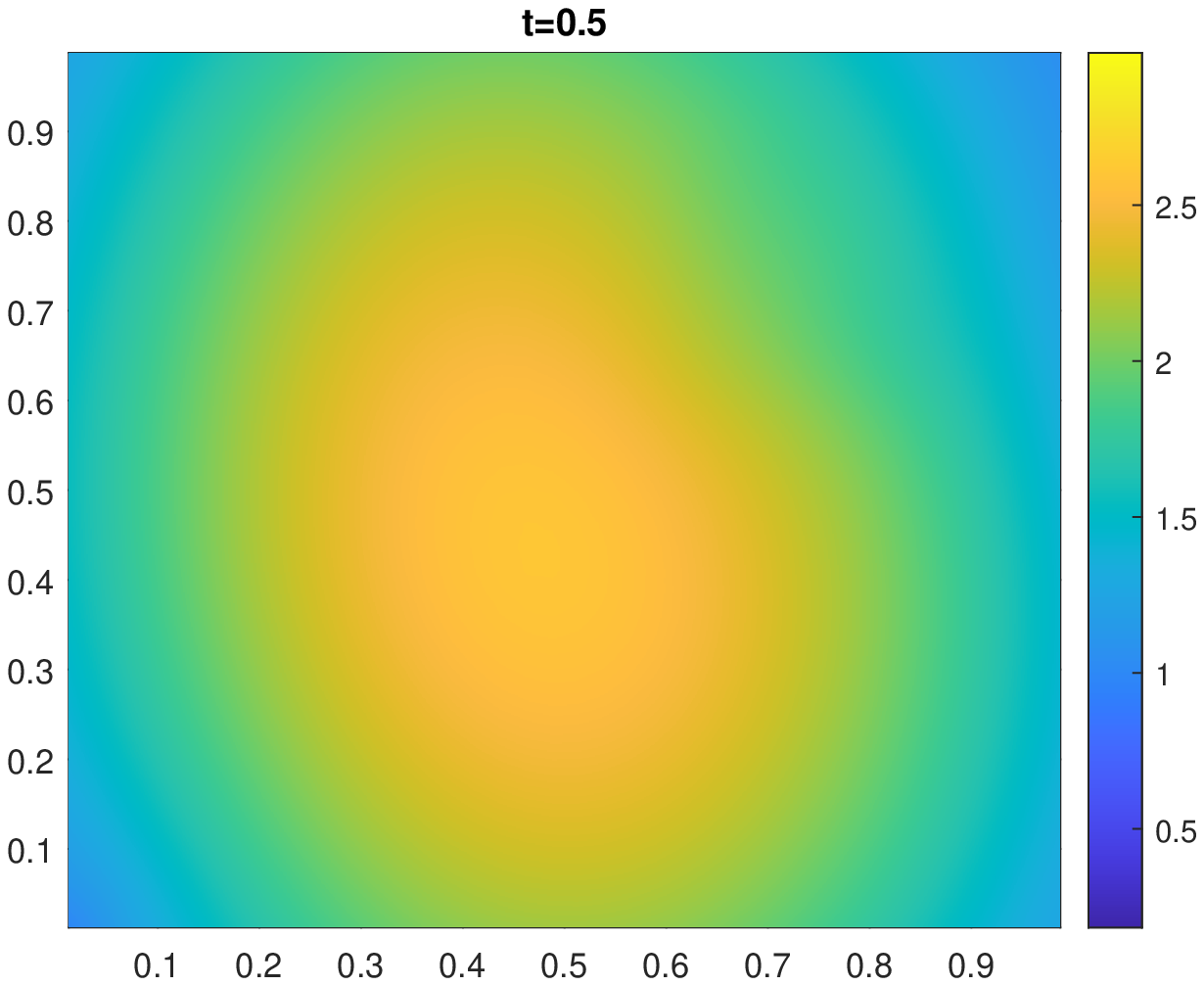}}
\subfigure{\includegraphics[width=0.32\linewidth]{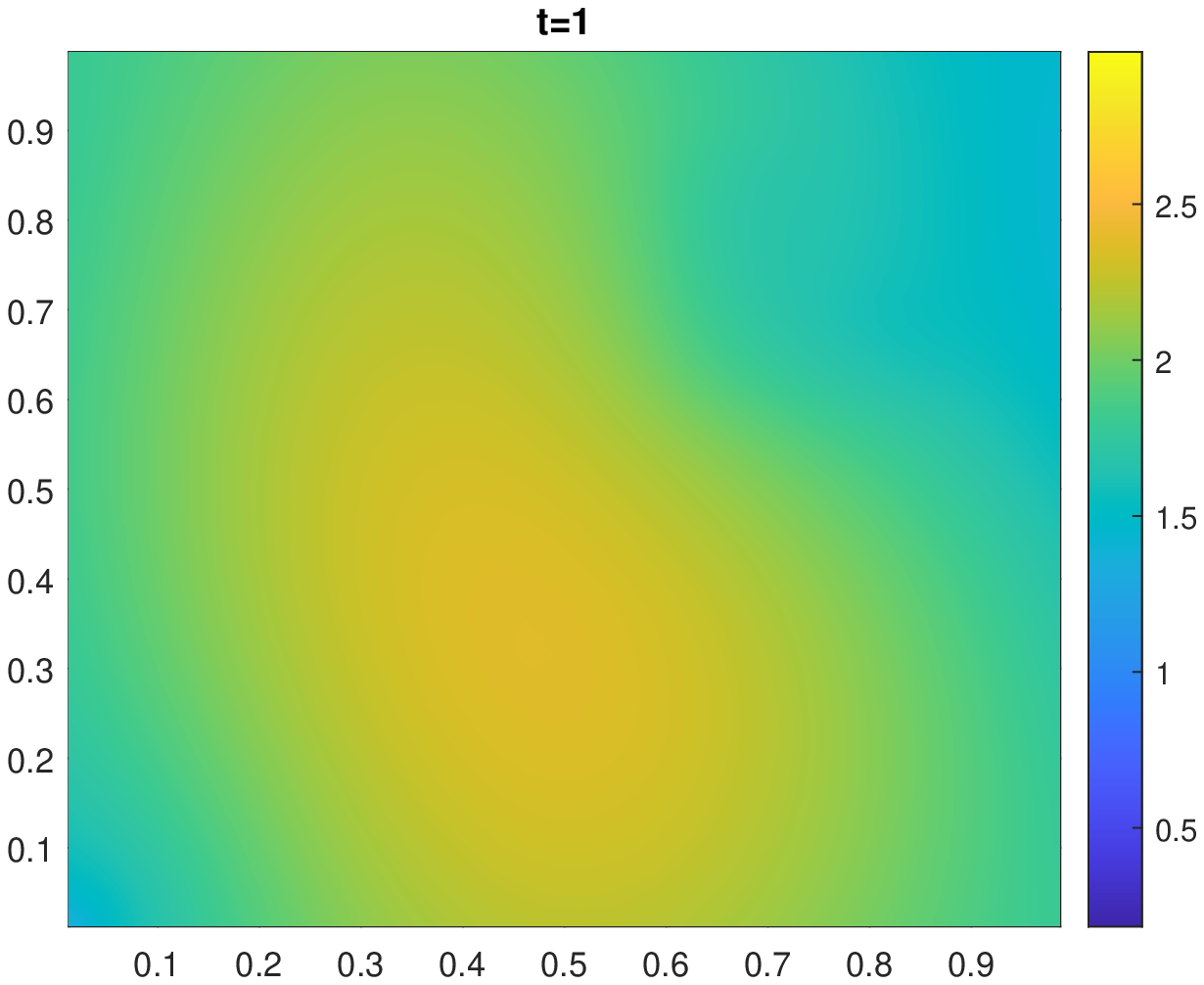}}
\subfigure{\includegraphics[width=0.32\linewidth]{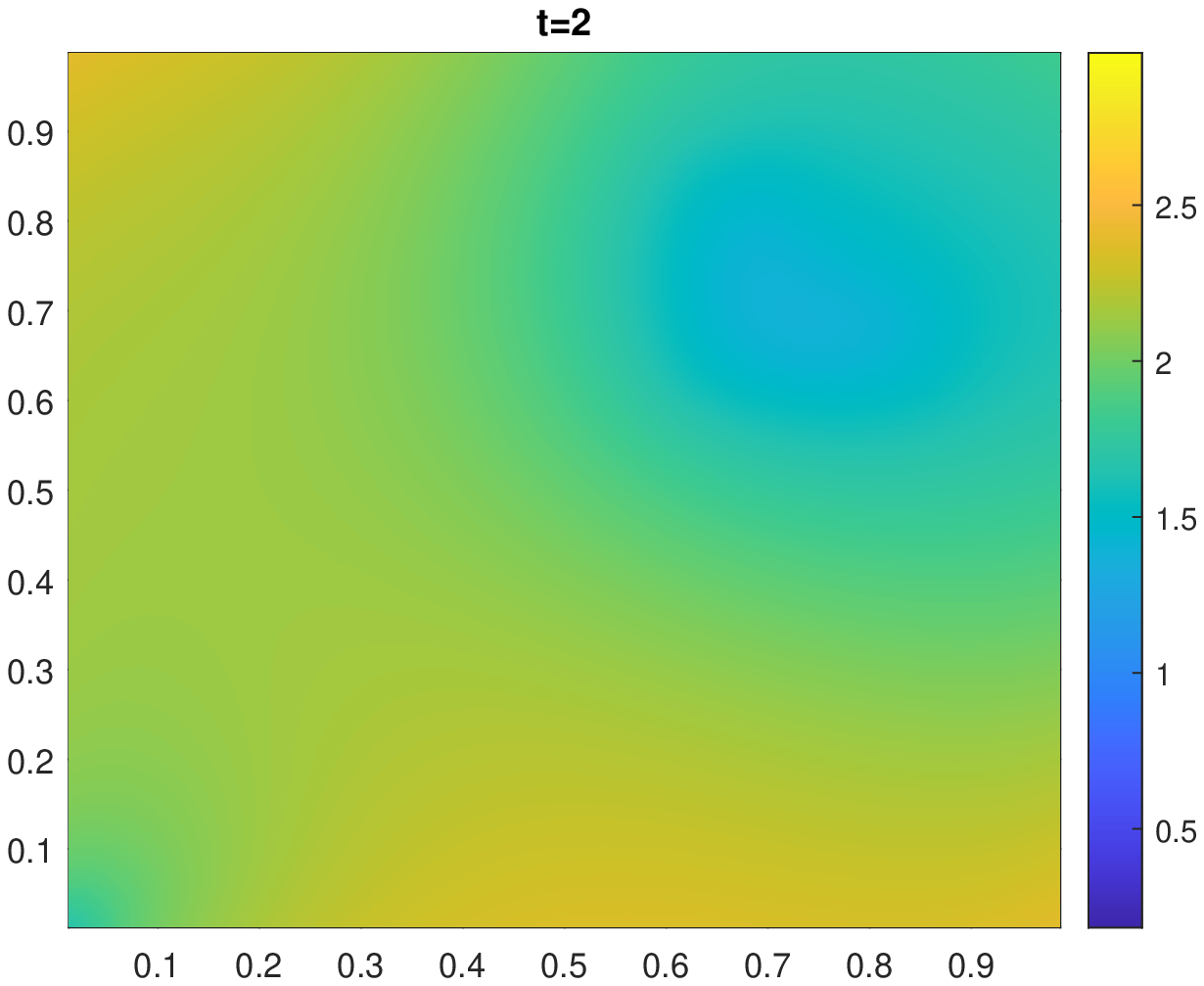}}
\subfigure{\includegraphics[width=0.32\linewidth]{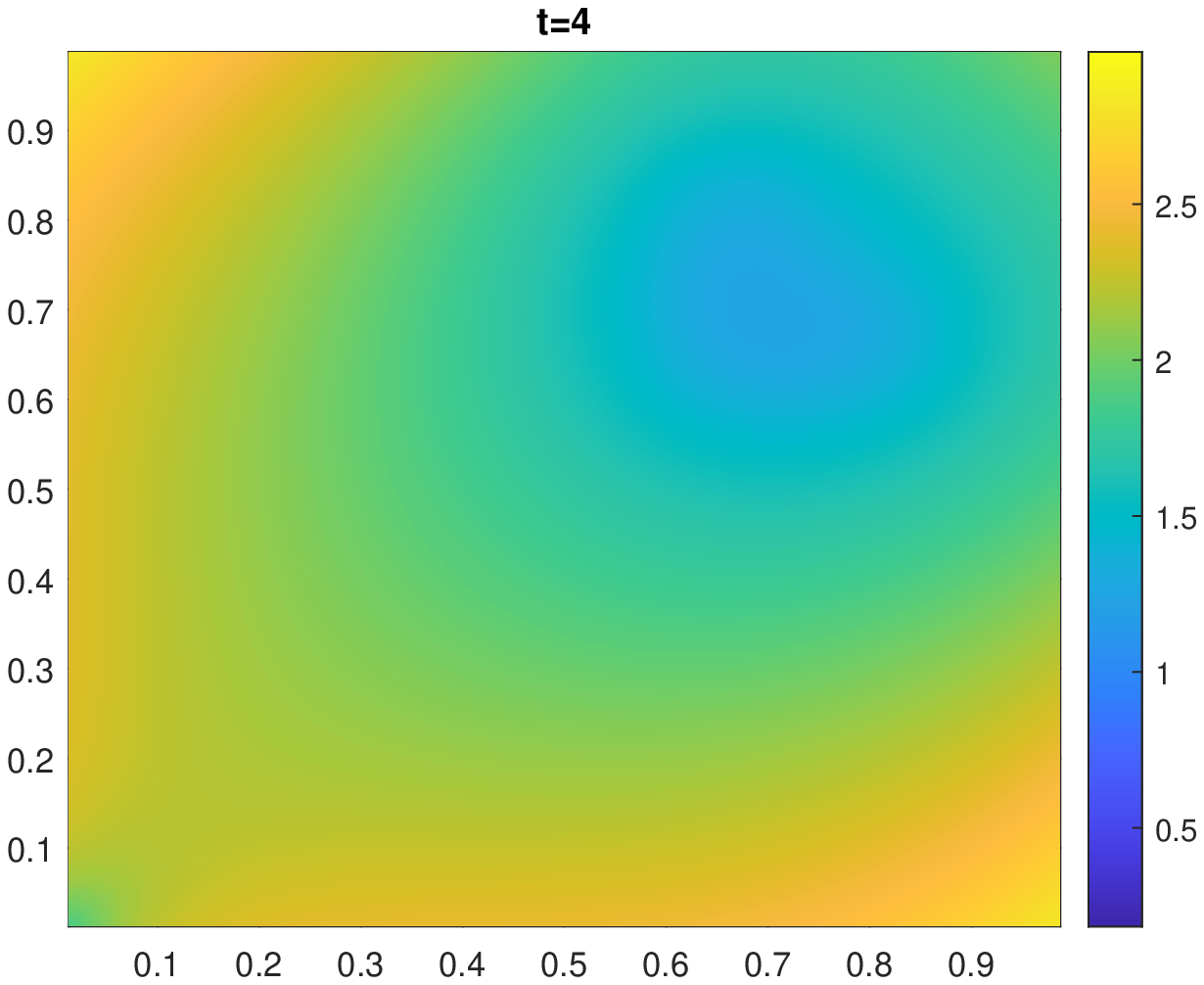}}
\subfigure{\includegraphics[width=0.32\linewidth]{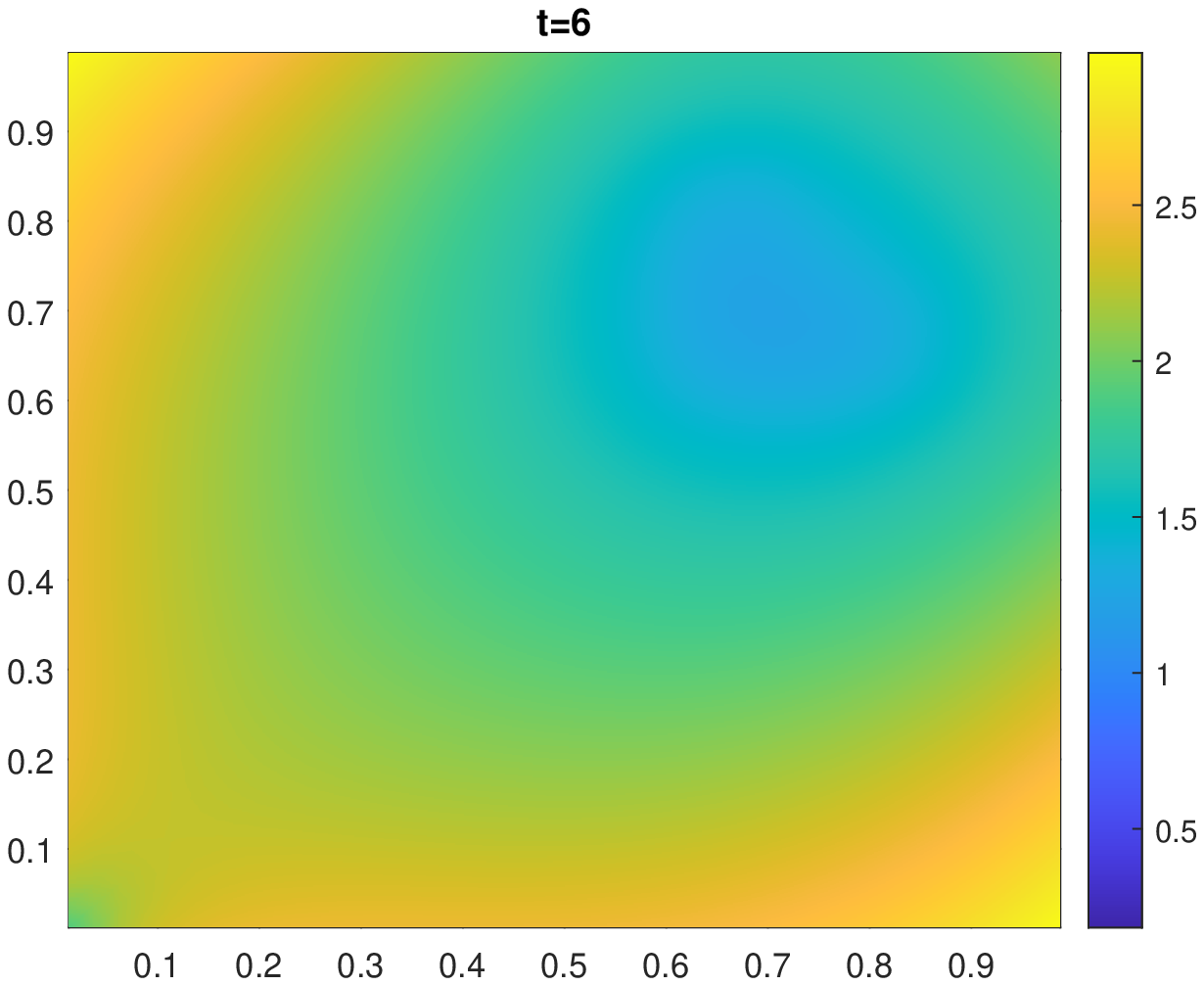}}
\end{figure}

 \begin{figure}[!tbp]
\caption{Energy dissipation and mass conservation }
\centering  
\subfigure{\includegraphics[width=0.48\linewidth]{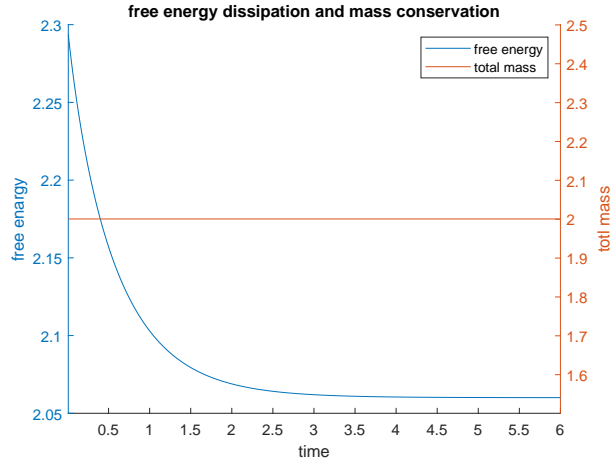}}
\end{figure}
\end{example}

\section{Concluding remarks}

In this paper, a dynamic mass transport method for the PNP system is established by drawing ideas from  both the JKO-type scheme \cite{JKO98, KMX17} and the classical Bennamou-Breiner formulation \cite{BB00}. The energy estimate resembles the physical energy law that governs the PNP system in the continuous case, where the JKO type formulation is an essential component for preserving intrinsic solution properties. Both mass conservation and the energy stability are shown to hold, irrespective of the size of time steps. To reduce computational cost, we use a local approximation for the artificial time in the constraint transport equation by a one step difference and the integral in time by a one term quadrature. 

Furthermore, by imposing a centered finite difference discretization in spatial variables, we establish the solvability of the constrained optimization problem. This also leads to a remarkable result: for any fixed time step and spatial meth size, density positivity will be propagating over all time steps, which is desired for any discrete version of the PNP system.

In the previous section, some numerical experiments were carried out to demonstrate the proven properties of a computed solution. The first experiment numerically verified that the variational scheme yields convergence to the solution of the nonlinear PDE with desired accuracy. 
Secondly, with further examples the computed solutions are also shown to satisfy the energy law for the PNP system, mass conservation, and positivity propagation. It is a matter of future work to prove an error estimate for these numerical solutions. This is not a standard error analysis due to the nonlinearities in these problems, as well as the reformulation as a constrained optimization problem. 

\section*{Acknowledgments}
This research was partially supported by the National Science Foundation under Grant DMS1812666.


\end{document}